\newenvironment{eq}{\begin{equation}}{\end{equation}}
\newenvironment{proof}{{\bf Proof}:}{\vskip 5mm }
\newtheorem{proposition}{Proposition}[subsection]
\newtheorem{lemma}[proposition]{Lemma}
\newtheorem{definition}[proposition]{Definition}
\newtheorem{example}[proposition]{Example}
\newtheorem{remark}[proposition]{Remark}
\newcommand{\llabel}[1]{\label{#1}}
\newcommand{\comment}[1]{}
\newcommand{\sr}{\rightarrow}
\newcommand{\nn}{{\bf N\rm}}
\newcommand{\wt}{\widetilde}
\newcommand{\spc}{{\,\,\,\,\,\,\,}}
\newcommand{\impl}{{\Rightarrow}}
\begin{document}
\parskip = 2mm
\begin{center}
{\bf\Large C-system of a module over a monad on sets\footnote{\em 2000 Mathematical Subject Classification: 03B15, 03B22, 03F50, 03G25}}

\vspace{3mm}

{\large\bf Vladimir Voevodsky}\footnote{School of Mathematics, Institute for Advanced Study,
Princeton NJ, USA. e-mail: vladimir@ias.edu}$^,$\footnote{Work on this paper was supported by NSF grant 1100938.}
\vspace {3mm}

{September 2014}  
\end{center}

\begin{abstract}
This is the second paper in a series started in \cite{Csubsystems} which aims to provide mathematical descriptions of objects and constructions related to the first few steps of the semantical theory of dependent type systems. 

We construct for any pair $(R,LM)$, where $R$ is a monad on sets and $LM$ is a left module over $R$, a C-system (``contextual category'') $CC(R,LM)$ and describe, using the results of \cite{Csubsystems}, a class of sub-quotients of $CC(R,LM)$ in terms of objects directly constructed from $R$ and $LM$. In the special case of the monads of expressions associated with  nominal signatures this construction gives the C-systems of general dependent type theories when they are specified by collections of judgements of the four standard kinds.
\end{abstract}


\subsection{Introduction}

The first few steps in all approaches to the semantics of dependent type theories remain insufficiently understood. The constructions which have been worked out in detail in the case of a few particular type systems by dedicated authors are being extended to the wide variety of type systems under consideration today by analogy. This is not acceptable in mathematics. Instead we should be able to obtain the required results for new type systems by {\em specialization} of general theorems formulated and proved for abstract objects the instances of which combine together to produce a given type system. 

One such class of objects is the class of C-systems introduced in \cite{Cartmell0} (see also \cite{Cartmell1}) under the name ``contextual categories''. A modified axiomatics of C-systems and the construction of new C-systems as sub-objects and regular quotients of the existing ones in a way convenient for use in type-theoretic applications are considered in \cite{Csubsystems}.

Modules over monads were introduced in \cite{HM2007} in the context of syntax with binding and substitution. 

In the present paper, after some general comments about monads on $Sets$ and their modules, we construct for any such monad $R$ and a left module $LM$ over $R$ a C-system (contextual category) $CC(R,LM)$.   We describe, using the results of \cite{Csubsystems}, all the C-subsystems of $CC(R,LM)$ in terms of objects directly associated with $R$ and $LM$. 

We then define two additional operations $\sigma$ and $\wt{\sigma}$ on $CC(R,LM)$ and describe the regular congruence relations (see \cite{Csubsystems}) on C-subsystems of $CC(R,LM)$ which are compatible in a certain sense with $\sigma$ and $\wt{\sigma}$.

Of a particular interest is the case of ``syntactic'' pairs where $R(\{x_1,\dots,x_n\})$ and $LM(\{x_1,\dots,x_n\})$ are the sets of expressions of some kind with free variables from $\{x_1,\dots,x_n\}$ modulo an equivalence relation such as $\alpha$-equivalence. 

The simplest class of syntactic pairs where $LM=R$ arises from signatures considered in \cite[p.228]{HM2007}. To any such signature $\Sigma$ one associates a class of expressions with bindings and $R(\{x_1,\dots,x_n\})$ is the set of such expressions with free variables from the set $\{x_1,\dots,x_n\}$ modulo the $\alpha$-equivalence.  

Suppose now that we are given a type theory based on the syntax of expressions specified by $\Sigma$ that is formulated in terms of the four kinds of basic judgements originally introduced by Per Martin-Lof in \cite[p.161]{ML78}.  

Since we are only interested in the $\alpha$-equivalence classes of judgements we may assume that the variables declared in the context are taken from the set of natural numbers such that the first declared variable is $1$, the second is $2$ etc.  Then, the set of judgements of the form 
$$(1:A_1,\dots,n:A_n\vdash A\, type)$$
(in the notation of Martin-Lof ``$A\,type\,(1\in A_1,\dots,n\in A_n)$'') can be identified with the set of judgements of the form 
$$(1:A_1,\dots,n:A_n, n+1:A\rhd)$$
stating that the context $(1:A_1,\dots,n:A_n, n+1:A)$ is well-formed. 

With this identification the type theory is specified by four sets $C,\wt{C},Ceq$ and $\wt{Ceq}$ where 
$$C \subset \coprod_{n\ge 0} LM(\emptyset)\times\dots\times LM(\{1,\dots,n-1\})$$
$$\wt{C}\subset  \coprod_{n\ge 0} LM(\emptyset)\times\dots\times LM(\{1,\dots,n-1\})\times R(\{1,\dots,n\})\times LM(\{1,\dots,n\})$$
$$Ceq \subset \coprod_{n\ge 0} LM(\emptyset)\times\dots\times LM(\{1,\dots,n-1\})\times LM(\{1,\dots,n\})^2$$
$$\wt{Ceq} \subset \coprod_{n\ge 0} LM(\emptyset)\times\dots\times LM(\{1,\dots,n-1\})\times R(\{1,\dots,n\})^2\times LM(\{1,\dots,n\})$$ 
On the other hand we show that any pair $(CC,\sim)$, where $CC$ is a sub-C-system of $CC(R,LM)$ and $\sim$ is a regular congruence relation on $CC$, defines four subsets of such form. Proposition \ref{2014.07.10.prop1} spells out the necessary and sufficient conditions that the sets $C,\wt{C},Ceq,\wt{Ceq}$ should satisfy in order to correspond to a pair $(CC,\sim)$. 

A wider class of syntactic pairs $(R,LM)$ that arises from nominal signatures is considered in Section \ref{2014.07.22.sec}.  

This is one the papers extending the material which I started to work on in \cite{NTS}. I would like to thank the Institute Henri Poincare in Paris and the organizers of the ``Proofs'' trimester for their hospitality during the preparation of this paper. The work on this paper was facilitated by discussions with Richard Garner and Egbert Rijke. 

Notation: For morphisms $f:X\sr Y$ and $g:Y\sr Z$ we denote their composition as $f\circ g$. For functors $F:{\cal C}\sr {\cal C}'$, $G:{\cal C}'\sr {\cal C}''$ we use the standard notation $G\circ F$ for their composition.

\subsection{Left modules over monads}
Recall (cf. \cite{HM2007}) that a monad on a category $\cal C$ is a functor $M:{\cal C}\sr {\cal C}$ together with two families of morphisms:
\begin{enumerate}
\item for any $X\in {\cal C}$, a morphism $\eta_X:X \sr R(X)$,
\item for any $X\in {\cal C}$, a morphism $\mu_X:R(R(X))\sr R(X)$
\end{enumerate}
which satisfy certain conditions. For objects $X$, $X'$ and a morphism $f:X'\sr R(X)$, the composition $R(X') \stackrel{R(f)}{\sr} R(R(X))\stackrel{\mu_X}{\sr} R(X)$ is a morphism  $bind(f):R(X')\sr R(X)$. This allows one to describe monads as follows:
\begin{lemma}
\llabel{2014.06.30.l1}
The construction outline above defines an equivalence between (the type of)  monads  on $\cal C$ and (the type of) collections of data of the form:
\begin{enumerate}
\item for every object $X$ an object $R(X)$,
\item for every object $X$ a morphism $\eta_X: X \sr R(X)$,
\item for every two objects $X$, $X'$ and a morphism $f:X\sr R(X')$, a morphism $ bind(f):R(X)\sr R(X')$
\end{enumerate}
which satisfy the following conditions:
\begin{enumerate}
\item for an object $X$, $ bind(\eta_X)=id_{R(X)}$,
\item for a morphism $f:X\sr X'$, $\eta_X\circ bind(f)=f$,
\item for two morphisms $f:X\sr R(X')$, $g:X'\sr R(X'')$, $ bind(f \circ bind(g))= bind(f)\circ bind(g)$.
\end{enumerate}
\end{lemma}
\begin{proof}
Straightforward. Cf. \cite{Moggi91}, \cite[Prop. 1]{HM2010}.
\end{proof}
\begin{lemma}
\llabel{2014.07.28.l2}
Let $R$ be a monad on the product category ${\cal C}\times {\cal D}$. Let $A\in {\cal D}$. Then the functor $R_{A,1}:X\mapsto pr_{\cal C}(R(X,A))$ has a natural structure of a monad on $\cal C$.
\end{lemma}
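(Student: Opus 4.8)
The plan is to avoid constructing $\eta$ and $\mu$ directly and instead exhibit the Kleisli-triple data of Lemma \ref{2014.06.30.l1} for the object-assignment $R_{A,1}:X\mapsto pr_{\cal C}(R(X,A))$, then check the three equational axioms. Two facts will be used repeatedly: a morphism in ${\cal C}\times{\cal D}$ is a pair of morphisms composed componentwise, so $pr_{\cal C}$ and $pr_{\cal D}$ are strictly compatible with composition; and I will write $R(X,A)=(pr_{\cal C}R(X,A),\,pr_{\cal D}R(X,A))$. For the unit I would set $\eta^A_X:=pr_{\cal C}(\eta_{(X,A)}):X\sr pr_{\cal C}R(X,A)=R_{A,1}(X)$, the first component of the unit of $R$ at $(X,A)$.

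The crucial definition is that of $bind$. Given $f:X\sr R_{A,1}(X')=pr_{\cal C}R(X',A)$, there is no morphism of ${\cal C}\times{\cal D}$ to which the $bind$ of $R$ applies until a second component is chosen, and the natural — and, as it turns out, the only workable — choice is the unit: set $\wt f:=(f,\,pr_{\cal D}(\eta_{(X',A)})):(X,A)\sr R(X',A)$ and define $bind^A(f):=pr_{\cal C}(bind(\wt f)):R_{A,1}(X)\sr R_{A,1}(X')$. Axioms 1 and 2 are then quick. For axiom 1, observe $\wt{\eta^A_X}=(pr_{\cal C}\eta_{(X,A)},pr_{\cal D}\eta_{(X,A)})=\eta_{(X,A)}$, whence $bind^A(\eta^A_X)=pr_{\cal C}(bind(\eta_{(X,A)}))=pr_{\cal C}(id)=id$ by axiom 1 for $R$. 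For axiom 2, $\eta^A_X\circ bind^A(f)=pr_{\cal C}(\eta_{(X,A)}\circ bind(\wt f))=pr_{\cal C}(\wt f)=f$ by axiom 2 for $R$.

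Axiom 3 (associativity) is where the real work lies, and it is the step I expect to be the main obstacle, since it is exactly where the ad hoc choice of second component in the definition of $bind^A$ must be shown to be coherent. Using componentwise compatibility of $pr_{\cal C}$ and axiom 3 for $R$, one has $bind^A(f)\circ bind^A(g)=pr_{\cal C}(bind(\wt f)\circ bind(\wt g))=pr_{\cal C}(bind(\wt f\circ bind(\wt g)))$, while $bind^A(f\circ bind^A(g))=pr_{\cal C}(bind(\widetilde{f\circ bind^A(g)}))$. Hence it suffices to prove that the two lifts $\wt f\circ bind(\wt g)$ and $\widetilde{f\circ bind^A(g)}$ agree as morphisms $(X,A)\sr R(X'',A)$. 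Their ${\cal C}$-components both equal $f\circ bind^A(g)$, using $pr_{\cal C}(\wt f)=f$ and $pr_{\cal C}(bind(\wt g))=bind^A(g)$. Their ${\cal D}$-components are $pr_{\cal D}(\eta_{(X',A)}\circ bind(\wt g))$ and $pr_{\cal D}(\eta_{(X'',A)})$ respectively; these coincide precisely by axiom 2 (the left unit law) for $R$, since $\eta_{(X',A)}\circ bind(\wt g)=\wt g$ and $pr_{\cal D}(\wt g)=pr_{\cal D}(\eta_{(X'',A)})$ by the very definition of $\wt g$.

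Finally, since every piece of this data was manufactured solely from the monad structure of $R$ and the chosen object $A$, the resulting monad structure on $R_{A,1}$ is canonical, giving the asserted natural monad structure; the induced functoriality of $R_{A,1}$ on morphisms is then the one supplied automatically by the equivalence of Lemma \ref{2014.06.30.l1}.
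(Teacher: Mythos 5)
Your proposal is correct and follows essentially the same route as the paper: it defines $\eta^A_X=pr_{\cal C}(\eta_{(X,A)})$ and $bind^A(f)=pr_{\cal C}(bind(f,pr_{\cal D}(\eta_{(X',A)})))$ and then checks the Kleisli axioms of Lemma \ref{2014.06.30.l1}, which is exactly the verification the paper declares straightforward (your choice of second component $pr_{\cal D}(\eta_{(X',A)})$ is the correct reading of the paper's formula, consistent with Lemma \ref{2014.07.28.l3}). Your explicit treatment of the associativity axiom, reducing it to the left unit law for $R$ in the ${\cal D}$-component, is a sound filling-in of the omitted details.
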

\begin{proof}
One defines the morphisms $\eta_X:X\sr R_{A,1}(X)$ by
$$\eta_X := pr_{\cal C}(\eta_{(X,A)})$$
and morphisms $ bind(f):R_{A,1}(X)\sr R_{A,1}(X')$ for $f:X\sr R_{A,1}(X')$ by
$$ bind(f) := pr_{\cal C}( bind(f,pr_{\cal D}(\eta_{(X,A)})))$$
The verification of the conditions of Lemma \ref{2014.06.30.l1} is straightforward. 
\end{proof}
The concept of a module over a monad was first explicitly introduced in \cite{HM2007}.
\begin{definition}
\llabel{2014.07.26.d1} Let $R$ be a monad on a category ${\cal C}$. A left module over $R$ with values in a category $\cal D$ is a functor $LM:{\cal C}\sr {\cal D}$ together with, for all $X, X'\in {\cal C}$ and $f:X\sr R(X')$, a morphism $\rho(f):LM(X)\sr LM(X')$ such that
\begin{enumerate}
\item $\rho(\eta_X)=Id_{LM(X)}$,
\item for $f:X\sr R(X')$, $g:X'\sr R(X'')$, $\rho(f) \rho(g)=\rho(f\, bind(g))$.
\end{enumerate}
\end{definition}
One verifies easily (cf. \cite[Def. 9]{HM2010}) that a left $R$-module structure on $LM$ is the same as a natural transformation $LM\circ R\sr LM$ which satisfies the expected compatibility conditions with respect to $Id\sr R$ and $R\circ R\sr R$. 
\begin{lemma}
\llabel{2014.07.28.l1}
Let $R$ be a monad on a category $\cal C$. Then one has:
\begin{enumerate}
\item If $LM_1$, $LM_2$ are left $R$-modules with values in ${\cal D}_1$ and ${\cal D}_2$ respectively then the functor $X\mapsto (LM_1(X),LM_2(X))$ has a natural structure of a left $R$-module with values in ${\cal D}_1\times {\cal D}_2$.
\item If $LM$ is a left $R$-module with values in $\cal D$ and $F:{\cal D}\sr {\cal D'}$ is a functor then $F\circ LM$ has a natural structure of a left $R$-module with values in $\cal D'$.
\end{enumerate}
\end{lemma}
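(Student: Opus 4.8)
The plan is to verify, in each of the two cases, that the proposed $\rho$ operations satisfy the two axioms of Definition \ref{2014.07.26.d1}. In both parts the underlying functor is assembled from $LM$ (or $LM_1,LM_2$) by an operation that is itself functorial, so the content is entirely formal: one produces a candidate module structure by the obvious pointwise or composite formula and then checks the unit and associativity-type identities by reducing them to the corresponding identities already assumed for the given modules.

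For part (1), I would define the module structure on $X\mapsto(LM_1(X),LM_2(X))$, a functor into ${\cal D}_1\times{\cal D}_2$, by setting $\rho(f):=(\rho_1(f),\rho_2(f))$ for every $f:X\sr R(X')$, where $\rho_1,\rho_2$ are the structure maps of $LM_1,LM_2$. Axiom (1) follows because $\rho(\eta_X)=(\rho_1(\eta_X),\rho_2(\eta_X))=(Id_{LM_1(X)},Id_{LM_2(X)})$, which is the identity of $(LM_1(X),LM_2(X))$ in the product category since identities in a product category are the pairs of identities. Axiom (2) follows componentwise: for $f:X\sr R(X')$, $g:X'\sr R(X'')$ we have $\rho(f)\rho(g)=(\rho_1(f)\rho_1(g),\rho_2(f)\rho_2(g))=(\rho_1(f\,bind(g)),\rho_2(f\,bind(g)))=\rho(f\,bind(g))$, using that composition in a product category is computed componentwise together with axiom (2) for each $LM_i$.

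For part (2), given $LM$ with structure maps $\rho$ and a functor $F:{\cal D}\sr{\cal D}'$, I would equip $F\circ LM$ with the structure map defined by applying $F$, namely $\rho^F(f):=F(\rho(f)):F(LM(X))\sr F(LM(X'))$ for $f:X\sr R(X')$. Axiom (1) then reads $\rho^F(\eta_X)=F(\rho(\eta_X))=F(Id_{LM(X)})=Id_{F(LM(X))}$, where the middle equality is axiom (1) for $LM$ and the last uses that $F$ preserves identities. Axiom (2) reads $\rho^F(f)\rho^F(g)=F(\rho(f))F(\rho(g))=F(\rho(f)\rho(g))=F(\rho(f\,bind(g)))=\rho^F(f\,bind(g))$, where the second equality uses functoriality of $F$ (preservation of composition) and the third uses axiom (2) for $LM$.

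The main point to be careful about, rather than a genuine obstacle, is naturality: in each case one must also confirm that $\rho^F$ (respectively the pairwise $\rho$) is natural in the appropriate sense and compatible with the functorial action, but this is immediate because $F$ is a functor and the product functor acts componentwise, so all the requisite squares are obtained by applying $F$ to, or pairing, the squares that already commute for $LM$. I expect no difficulty beyond this bookkeeping; the verifications are routine once the structure maps are defined by the evident formulas.
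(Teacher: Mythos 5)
Your proposal is correct and matches the paper's intent: the paper's proof is simply the single word ``Straightforward,'' and the evident componentwise and $F$-applied structure maps you write down, together with the axiom checks reducing to the product-category and functoriality identities, are exactly the verification being elided. Note only that Definition \ref{2014.07.26.d1} does not impose a separate naturality axiom beyond the two listed conditions, so the final paragraph of your argument is harmless but not required.
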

\begin{proof}
Straightforward.
\end{proof}
\begin{lemma}
\llabel{2014.07.28.l3}
Under the assumptions and in the notation of Lemma \ref{2014.07.28.l2} the morphisms
$$\rho(f:X\sr R_{A,1}(X'))= bind(f, pr_{\cal D}(\eta_{(X',A)})) : R(X,A)\sr R(X',A)$$
define a structure of a left $R_{A,1}$-module with values in ${\cal C}\times {\cal D}$ on the functor 
$$M_{A,1}:X\mapsto R(X,A)$$
\end{lemma}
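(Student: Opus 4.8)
The plan is to verify directly the two axioms of Definition \ref{2014.07.26.d1} for the functor $M_{A,1}$ equipped with the operation $\rho$, using the monad laws of Lemma \ref{2014.06.30.l1} for $R$ together with the formulas for the unit and $bind$ of $R_{A,1}$ from Lemma \ref{2014.07.28.l2}. Two bookkeeping facts will be used throughout: first, morphisms in ${\cal C}\times{\cal D}$ compose componentwise, so $pr_{\cal C}$ and $pr_{\cal D}$ are compatible with composition; second, the unit of $R$ decomposes as $\eta_{(X,A)}=(pr_{\cal C}(\eta_{(X,A)}),pr_{\cal D}(\eta_{(X,A)}))$, where $pr_{\cal C}(\eta_{(X,A)})$ is precisely the unit $\eta_X$ of $R_{A,1}$. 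Note first that $\rho(f)$ is well typed: for $f:X\sr R_{A,1}(X')$ the pair $(f,pr_{\cal D}(\eta_{(X',A)}))$ is a morphism $(X,A)\sr R(X',A)$ in ${\cal C}\times{\cal D}$, so $bind$ of it is a morphism $R(X,A)\sr R(X',A)$, as required.

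For the unit axiom I would observe that the argument of $bind$ defining $\rho(\eta_X)$ is exactly $(pr_{\cal C}(\eta_{(X,A)}),pr_{\cal D}(\eta_{(X,A)}))=\eta_{(X,A)}$, by the decomposition above. Hence $\rho(\eta_X)=bind(\eta_{(X,A)})=Id_{R(X,A)}$ by the first law of Lemma \ref{2014.06.30.l1}, which is the required identity $\rho(\eta_X)=Id_{M_{A,1}(X)}$.

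The multiplicativity axiom is the main step. Given $f:X\sr R_{A,1}(X')$ and $g:X'\sr R_{A,1}(X'')$, I would first apply the third law of Lemma \ref{2014.06.30.l1} to rewrite
$$\rho(f)\circ\rho(g)=bind(f,pr_{\cal D}(\eta_{(X',A)}))\circ bind(g,pr_{\cal D}(\eta_{(X'',A)}))=bind\big((f,pr_{\cal D}(\eta_{(X',A)}))\circ bind(g,pr_{\cal D}(\eta_{(X'',A)}))\big).$$
Since $\rho(f\circ bind_{R_{A,1}}(g))=bind(f\circ bind_{R_{A,1}}(g),pr_{\cal D}(\eta_{(X'',A)}))$, it then suffices to check that the two arguments of $bind$ coincide as morphisms $(X,A)\sr R(X'',A)$, which I would do componentwise. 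The ${\cal C}$-component of the left argument is $f\circ pr_{\cal C}(bind(g,pr_{\cal D}(\eta_{(X'',A)})))$, and this equals $f\circ bind_{R_{A,1}}(g)$ by the formula for $bind_{R_{A,1}}$ in Lemma \ref{2014.07.28.l2}, matching the ${\cal C}$-component of the right argument. For the ${\cal D}$-components, the right argument has ${\cal D}$-component $pr_{\cal D}(\eta_{(X'',A)})$, while the left one is $pr_{\cal D}(\eta_{(X',A)})\circ pr_{\cal D}(bind(g,pr_{\cal D}(\eta_{(X'',A)})))$; applying the second law $\eta_{(X',A)}\circ bind(q)=q$ of Lemma \ref{2014.06.30.l1} to $q=(g,pr_{\cal D}(\eta_{(X'',A)}))$ and projecting to ${\cal D}$ yields exactly $pr_{\cal D}(\eta_{(X'',A)})$. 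Thus both components agree and the axiom holds.

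The only point demanding care — and hence the main obstacle — is keeping straight which instance of $\eta$ enters each expression: the ${\cal C}$-components match on the nose only because $bind_{R_{A,1}}$ and $\rho$ are built from the same pairing, whereas the ${\cal D}$-components agree precisely because the left unit law converts $pr_{\cal D}(\eta_{(X',A)})$ post-composed with $bind$ back into $pr_{\cal D}(\eta_{(X'',A)})$. Finally, to see that $M_{A,1}$ is a functor compatible with $\rho$ one checks $\rho(u\circ\eta_{X'})=R(u,Id_A)=M_{A,1}(u)$ for $u:X\sr X'$, the argument of $bind$ there being $(u,Id_A)\circ\eta_{(X',A)}$; this reduces to the standard identity $bind(h\circ\eta)=R(h)$ and is routine given the laws of Lemma \ref{2014.06.30.l1}.
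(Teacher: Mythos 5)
Your proof is correct and follows exactly the route the paper intends: its proof is simply ``direct verification of the conditions of Definition \ref{2014.07.26.d1},'' and you have carried out that verification in full, using the monad laws of Lemma \ref{2014.06.30.l1} componentwise in ${\cal C}\times{\cal D}$. The only remark worth adding is that you (correctly) read the $bind$ formula of Lemma \ref{2014.07.28.l2} with the unit at the target object, i.e.\ $pr_{\cal D}(\eta_{(X',A)})$, which is the only reading under which the expression typechecks.
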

\begin{proof}
Direct verification of the conditions of Definition \ref{2014.07.26.d1}.
\end{proof}

In the case of a monad $R$ on $Sets$ and a left $R$-module $LM$ with values in $Sets$, for $E\in LM(\{x_1,\dots,x_n\})$ and $f:\{x_1,\dots,x_n\}\sr R(X')$ such that $f(x_i)=f_i$ we write $\rho(f)(E)$ as $E(f_1/x_1,\dots,f_n/x_n)$. 

For $E\in LM(\{1,\dots,m\})$ and $n\ge 1$ we set:
$$t_n(E)=E[n+1/n,n+2/n+1,\dots,m+1/m]$$
$$s_n(E)=E[n/n+1,n+1/n+2,\dots,m-1/m]$$
If we were numbering elements of a set with $n$ elements from $0$ then we would have $t_n=LM(\partial_{n-1})$ and $s_n=LM(\sigma_{n-1})$ where $\partial_i$ and $\sigma_i$ are the usual generators of the simplicial category. 

For a monad $R$ on $Sets$ we let $R-cor$ (``R-correspondences'') to be the full subcategory of the Kleisli category of $R$ whose objects are finite sets. Recall, that the set of morphisms from $X$ to $Y$ in $R-cor$ is the set of maps from $X$ to $R(Y)$ i.e. $R(Y)^X$ (in other words, $R-cor$ is the 
category of free, finitely generated $R$-algebras).   

We further let $C(R)$ denote the pre-category\footnote{See the introduction to \cite{Csubsystems}.}  with 
$$Ob(C(R))=\nn$$
$$Mor(C(R))=\coprod_{m,n\in\nn} R(\{1,\dots,m\})^n$$
which is equivalent, as a category, to $(R-cor)^{op}$.
\begin{remark}\rm
A finitary monad (on sets) is a monad $R:Sets\sr Sets$ that, as a functor, commutes with filtering colimits. Since any set is, canonically, the colimit of the filtering diagram of its finite subsets, a functor $Sets \sr Sets$ that commutes with filtering colimits can be equivalently described as a functor $FSets \sr Sets$ where $FSets$ is the category of finite sets. Furthermore, Lemma \ref{2014.06.30.l1} can be used to show that finitary monads on $Sets$ can be defined as collections of data of the form:
\begin{enumerate}
\item for every finite set $X$ a set $R(X)$,
\item for every finite set $X$ a function $\eta_X: X \sr R(X)$,
\item for every finite sets $X$, $X'$ and a function $f:X\sr R(X')$, a function $ bind(f):R(X)\sr R(X')$
\end{enumerate}
which satisfy the conditions:
\begin{enumerate}
\item for a finite set $X$, $ bind(\eta_X)=id_{R(X)}$,
\item for a function of finite sets $f:X\sr X'$, $\eta_X\circ bind(f)=f$,
\item for two functions $f:X\sr R(X')$, $g:X'\sr R(X'')$, $ bind(f\circ bind(g))= bind(f)\circ bind(g)$.
\end{enumerate}
This description shows that for any monad $R$ the restriction of $R$ to a functor $R^{fin}:FSets\sr Sets$ is a finitary monad. 

Similar observations apply to left $R$-modules. The constructions of this paper, while done for a general pair $(R,LM)$, only depend on the corresponding finitary pair $(R^{fin},LM^{fin})$. 
\end{remark}
\begin{remark}\rm
The correspondence $R\mapsto C(R)$ defines an equivalence between the type of the finitary monads on $Sets$ and the type of the pre-category structures on $\nn$ that extend the pre-category structure of finite sets and where the addition remains to be the coproduct. 
\end{remark}
\begin{remark}\rm A finitary sub-monad of $R$ is the same as a sub-pre-category in $C(R)$ which contains all objects. Intersection of two sub-monads is a sub-monad which allows one to speak of sub-monads generated by a set of elements. 
\end{remark}

\subsection{The C-system $CC(R,LM)$.}
Let $R$ be a monad on $Sets$ and $LM$ a left module over $R$ with values in $Sets$. Let $CC(R,LM)$ be the pre-category whose set of objects is $Ob(CC(R,LM))=\amalg_{n\ge 0} Ob_n$ where 
$$Ob_n=LM(\emptyset)\times\dots\times LM(\{1,\dots,n-1\})$$
and the set of morphisms is
$$Mor(CC(R,LM))=\coprod_{m,n\ge 0} Ob_m\times Ob_n\times R(\{1,\dots,m\})^n$$
with the obvious domain and codomain maps. The composition of morphisms is defined in the same way as in $C(R)$ such that the mapping $Ob(CC(R,LM))\sr \nn$ which sends all elements of $Ob_n$ to $n$, is a functor from $CC(R,LM)$ to $C(R)$. The associativity of compositions follows immediately from the associativity of compositions in $R-cor$. 

Note that if $LM(\emptyset)=\emptyset$ then $CC(R,LM)=\emptyset$ and otherwise the functor $CC(R,LM)\sr C(R)$ is an equivalence, so that in the second case $C(R)$ and $CC(R,LM)$ are indistinguishable as categories. However, as pre-categories they are quite different unless $LM=(X\mapsto pt)$ in which case the functor $CC(R,LM)\sr C(R)$ is an isomorphism. 

The pre-category $CC(R,LM)$ is given the structure of a C-system as follows. The final object is the only element of $Ob_0$, the map $ft$ is defined by the rule
$$ft(T_1,\dots,T_n)=(T_1,\dots,T_{n-1}).$$
The canonical pull-back square defined by an object $(T_1,\dots,T_{n+1})$ and a morphism 
$$(f_1,\dots,f_{n}):(R_1,\dots,R_m)\sr (T_1,\dots,T_{n})$$
is of the form:
\begin{eq}
\label{2009.11.05.oldeq1}
\begin{CD}
(R_1,\dots,R_m,T_{n+1}(f_1/1,\dots, f_{n}/n)) @>(f_1,\dots,f_{n},m+1)>> (T_1,\dots,T_{n+1})\\
@V(1,\dots,m)VV @VV(1,\dots,n)V\\
(R_1,\dots,R_m) @>(f_1,\dots,f_{n})>> (T_1,\dots,T_{n})
\end{CD}
\end{eq}
\begin{proposition}
\llabel{2009.10.01.prop2}
With the structure defined above $CC(R,LM)$ is a C-system.
\end{proposition}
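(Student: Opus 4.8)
The plan is to verify, condition by condition, the axioms of a C-system as formulated in \cite{Csubsystems}. The underlying category structure is already in hand: composition is inherited from $C(R)\simeq (R-cor)^{op}$ and its associativity has been noted above, so $CC(R,LM)$ is a category. The length function $\ell:Ob(CC(R,LM))\to\nn$ sending $Ob_n$ to $n$ has $\ell^{-1}(0)=Ob_0=pt$ (the empty product), and this single object is final because for any $(R_1,\dots,R_m)$ the set of morphisms to it is $R(\{1,\dots,m\})^0=pt$. The operation $ft$ lowers $\ell$ by one for $n\ge 1$ and fixes $pt$, and the canonical projection of $X=(T_1,\dots,T_{n+1})$ is the legitimate morphism $p_X=(1,\dots,n):X\to ft(X)$. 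So far everything is immediate; the substance is in the canonical square (\ref{2009.11.05.oldeq1}).

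First I would record that the square (\ref{2009.11.05.oldeq1}) has the correct boundary data: its lower-left object is $Y=(R_1,\dots,R_m)=ft(f^*X)$, its left edge is $p_{f^*X}=(1,\dots,m)$, and its bottom edge is $f$. Commutativity, $p_{f^*X}\circ f=q(f,X)\circ p_X$, is a one-line Kleisli computation: both composites are the map $\{1,\dots,n\}\to R(\{1,\dots,m+1\})$ sending $i$ to the image of $f_i$ under the inclusion $\{1,\dots,m\}\hookrightarrow\{1,\dots,m+1\}$, using the unit law $\eta_X\circ bind(g)=g$ of Lemma \ref{2014.06.30.l1}. For the pullback property I would pass through the equivalence $CC(R,LM)\to C(R)\simeq (R-cor)^{op}$ (valid when $LM(\emptyset)\neq\emptyset$; otherwise $CC(R,LM)$ has only the final object and the relevant conditions are vacuous). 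Since equivalences reflect limits, it suffices to see that the image square is a pullback in $(R-cor)^{op}$, i.e. a pushout in $R-cor$. In $R-cor$ the coproduct is disjoint union of generating sets, and the edge $\{1,\dots,n\}\to\{1,\dots,n+1\}$ underlying $p_X$ is the coproduct injection $A\to A\sqcup\{*\}$. The pushout of such an injection along an arbitrary $f:A\to C$ is the injection $C\to C\sqcup\{*\}$ together with $f\sqcup id$, and one checks this matches $p_{f^*X}$ and $q(f,X)$ on the nose; hence the square is cocartesian and (\ref{2009.11.05.oldeq1}) is a pullback.

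Next I would verify the strict functoriality of $f\mapsto f^*X$ and $f\mapsto q(f,X)$, which is where the left-module structure enters. For the identity, taking $f$ to be $(1,\dots,n)$ gives last coordinate $T_{n+1}(1/1,\dots,n/n)=\rho(\eta)(T_{n+1})=T_{n+1}$ by condition 1 of Definition \ref{2014.07.26.d1}, so $id^*X=X$, while $q(id,X)=(1,\dots,n,n+1)=id_X$. For composition, with $f:Y\to ft(X)$ and $g:Z\to Y$, the last coordinate of $g^*(f^*X)$ is $\rho(g)\bigl(\rho(f)(T_{n+1})\bigr)$, whereas that of $(g\circ f)^*X$ is $\rho(f\,bind(g))(T_{n+1})$, the Kleisli arrow underlying $g\circ f$ being $f\,bind(g)$; these coincide by condition 2 of Definition \ref{2014.07.26.d1}, giving $(g\circ f)^*X=g^*(f^*X)$. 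The matching equality $q(g\circ f,X)=q(g,f^*X)\circ q(f,X)$ is then a direct computation of the two composites in $R-cor$ via the associativity and unit laws of Lemma \ref{2014.06.30.l1}; both equal $((g\circ f)_1,\dots,(g\circ f)_n,k+1)$.

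None of these steps is deep: once the notational dictionary is fixed — the substitution $E(f_1/x_1,\dots)$ is $\rho(f)(E)$, the extra variable is adjoined as a coproduct summand, and composition is Kleisli composition in the convention $f\circ g$ meaning ``first $f$ then $g$'' — every condition reduces to a monad law of Lemma \ref{2014.06.30.l1} or one of the two module laws of Definition \ref{2014.07.26.d1}. The only genuinely categorical assertion is that (\ref{2009.11.05.oldeq1}) is a pullback, and I expect the principal effort to be bookkeeping: keeping the inclusions $R(\{1,\dots,m\})\to R(\{1,\dots,m+1\})$ and the index shifts straight while reconciling the two descriptions of the distinguished square. The cleanest way to contain this is to carry out the pullback verification abstractly, via coproduct injections in $R-cor$ as above, rather than by an explicit chase of the universal property.
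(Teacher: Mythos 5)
Your proof is correct and supplies exactly the verification that the paper compresses into the single word ``Straightforward,'' so there is no authorial argument to compare against. The one step with real content --- that the square (\ref{2009.11.05.oldeq1}) is a pullback --- is handled soundly by your observation that its image in $R-cor$ is the pushout of the coproduct injection $\{1,\dots,n\}\sr \{1,\dots,n\}\sqcup\{*\}$ along $f$ and that the equivalence $CC(R,LM)\sr C(R)$ reflects limits, while the identity and composition laws for $f\mapsto f^*X$ correctly reduce to the two module laws of Definition \ref{2014.07.26.d1} and the monad laws of Lemma \ref{2014.06.30.l1}. The only quibble is the degenerate case: the paper asserts $CC(R,LM)=\emptyset$ when $LM(\emptyset)=\emptyset$, whereas your reading (the empty product $Ob_0$ is a point, giving the trivial one-object C-system) is the one under which the proposition remains literally true; this does not affect the main case.
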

\begin{proof}
Straightforward.
\end{proof}
\begin{remark}\rm
There is another construction of a pre-category from $(R,LM)$ which takes as an additional parameter a set $Var$ which is called the set of variables. Let $F_n(Var)$ be the set of sequences of length $n$ of pair-wise distinct elements of $Var$. Define the pre-category $CC(R,LM,Var)$ as follows. The set of objects of $CC(R,LM,Var)$ is 
$$Ob(CC(R,LM,Var))= \amalg_{n\ge 0} \amalg_{(x_1,\dots,x_n)\in F_n(Var)} LM(\emptyset)\times\dots\times LM(\{x_1,\dots,x_{n-1}\})$$
For compatibility with the traditional type theory we will write the elements of $Ob(CC(R,LM,X))$ as sequences of the form $x_1:E_1,\dots,x_n:E_n$. The set of morphisms is given by
$$Hom_{CC(R,LM,Var)}((x_1:E_1,\dots,x_m:E_m),(y_1:T_1,\dots,y_n:T_n))=R(\{x_1,\dots,x_m\})^n$$
The composition is defined in such a way that the projection 
$$(x_1:E_1,\dots,x_n:E_n)\mapsto (E_1,E_2(1/x_1),\dots,E_n(1/x_1,\dots,n-1/x_{n-1}))$$
is a functor from $CC(R,LM,Var)$ to $CC(R,LM)$. 

This functor is clearly an equivalence of categories but not an isomorphism of pre-categories. 

There are an obvious final object and the map $ft$ on $CC(R,LM,Var)$. 

There is however a real problem in making it into a C-system  which is due to the following. Consider an object $(y_1:T_1,\dots,y_{n+1}:T_{n+1})$ and a morphism $(f_1,\dots,f_n):(x_1:R_1,\dots,x_m:R_m)\sr (y_1:T_1,\dots,y_{n}:T_{n})$. In order for the functor to $CC(R,LM)$ to be a C-system morphism the canonical square build on this pair should have the form
$$
\begin{CD}
(x_1:R_1,\dots,x_m:R_m,x_{m+1}:T_{n+1}(f_1/1,\dots, f_{n}/n)) @>(f_1,\dots,f_{n},x_{n+1})>> (y_1:T_1,\dots,y_{n+1}:T_{n+1})\\
@VVV @VVV\\
(x_1:R_1,\dots,x_m:R_m) @>(f_1,\dots,f_{n})>> (y_1:T_1,\dots,y_n:T_{n})
\end{CD}
$$
where $x_{m+1}$ is an element of $Var$ which is distinct from each of the elements $x_1,\dots,x_m$. Moreover, we should choose  $x_{m+1}$ in such a way the the resulting construction satisfies the C-system axioms for $(f_1,\dots,f_{n})=Id$ and for the compositions $(g_1,\dots,g_m)\circ (f_1,\dots,f_n)$. One can easily see that no such choice is possible for a finite set $Var$. At the moment it is not clear to me whether or not it is possible for an infinite $Var$.
\end{remark}

Recall from \cite{Csubsystems} that for a C-system $CC$ one defines $\wt{Ob}(CC)$ as the subset of $Mor(CC)$ which consists of morphisms $s$ of the form $ft(X)\sr X$ such that $l(X)>0$ and $s\circ p_X=Id_{ft(X)}$. 
\begin{lemma}
\llabel{2014.06.30.l2}
One has:
$$\wt{Ob}(CC(R,LM))\cong \coprod_{n\ge 0} LM(\emptyset)\times\dots\times LM(\{1,\dots,n\})\times R(\{1,\dots,n\})$$
\end{lemma}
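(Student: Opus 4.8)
The plan is to unwind the definition of $\wt{Ob}$ and read off, coordinate by coordinate, what a section of a canonical projection amounts to. Recall that an element of $\wt{Ob}(CC(R,LM))$ is a morphism $s\colon ft(X)\sr X$ with $l(X)>0$ satisfying $s\circ p_X=Id_{ft(X)}$. Every object $X$ with $l(X)>0$ has the form $X=(T_1,\dots,T_{n+1})$ for a unique $n\ge 0$, with $T_i\in LM(\{1,\dots,i-1\})$; then $ft(X)=(T_1,\dots,T_n)$ and, by the canonical pull-back square (\ref{2009.11.05.oldeq1}), the projection $p_X$ is the morphism $(1,\dots,n)\in R(\{1,\dots,n+1\})^n$ going from length $n+1$ to length $n$. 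A section $s\colon ft(X)\sr X$ is then a tuple $(s_1,\dots,s_{n+1})$ with each $s_j\in R(\{1,\dots,n\})$.

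First I would make the composition in $CC(R,LM)$, which is defined as in $C(R)\simeq (R-cor)^{op}$, explicit on coordinates: the composite of $f=(f_1,\dots,f_k)$ (first) followed by $g=(g_1,\dots,g_l)$ (second) has $j$-th coordinate $bind(\wt f)(g_j)$, where $\wt f$ denotes the map $i\mapsto f_i$. Applying this with $f=s$ and $g=p_X$, the $j$-th coordinate of $s\circ p_X$ (for $1\le j\le n$) is $bind(\wt s)(\eta_{\{1,\dots,n+1\}}(j))$, which by the unit identity for $bind$ (condition 2 of Lemma \ref{2014.06.30.l1}) equals $\wt s(j)=s_j$. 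Hence $s\circ p_X=(s_1,\dots,s_n)$, and since $Id_{ft(X)}=(1,\dots,n)=(\eta(1),\dots,\eta(n))$, the section condition is equivalent to $s_j=\eta(j)$ for all $1\le j\le n$.

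Thus a section $s$ is completely determined by its underlying object $X=(T_1,\dots,T_{n+1})$ together with its single free coordinate $s_{n+1}\in R(\{1,\dots,n\})$, every other coordinate being forced. Reading off these data gives, for each fixed $n\ge 0$, a bijection between the sections with $l(X)=n+1$ and
$$LM(\emptyset)\times\dots\times LM(\{1,\dots,n\})\times R(\{1,\dots,n\}),$$
where the first $n+1$ factors record $T_1,\dots,T_{n+1}$ and the last factor records $s_{n+1}$ (the case $n=0$, where $ft(X)$ is the final object and $s_1\in R(\emptyset)$ is unconstrained, is covered uniformly). Taking the disjoint union over $n\ge 0$ yields the stated isomorphism.

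The only step requiring genuine care is the second one: keeping the composition convention $f\circ g$ (``first $f$, then $g$'') straight and correctly transporting it through the equivalence $C(R)\simeq(R-cor)^{op}$ into the $bind$ formula, so that $p_X$ really annihilates exactly the first $n$ coordinates of $s$ and leaves the last one free. Once that formula and the monad unit law are in place, the remainder is a direct matching of indices.
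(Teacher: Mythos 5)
Your proposal is correct and follows essentially the same route as the paper: unwind the definition of a section, observe that $s\circ p_X=Id_{ft(X)}$ forces $f_i=i$ for $i=1,\dots,n$, and conclude that the section is determined by its last component $f_{n+1}\in R(\{1,\dots,n\})$. The only difference is that you spell out the Kleisli composition and the unit law explicitly, where the paper declares this step to follow ``immediately from the definition.''
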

\begin{proof}
An element of $\wt{Ob}(CC(R,LM))$ is a section $s$ of the canonical morphism $p_{\Gamma}:\Gamma\sr ft(\Gamma)$. It follows immediately from the definition of $CC(R,LM)$ that for $\Gamma=(E_1,\dots,E_{n+1})$, a morphism $(f_1,\dots,f_{n+1})\in R(\{1,\dots,n\})^{n+1}$ from $ft(\Gamma)$ to $\Gamma$ is a section of $p_{\Gamma}$ if an only if $f_i=i$ for $i=1,\dots,n$. Therefore, any such section is determined by its last component $f_{n+1}$ and mapping
$((E_1,\dots,E_n), (E_1,\dots,E_{n+1}), (f_1,\dots,f_{n+1}))$ to $(E_1,\dots,E_n,E_{n+1},f_{n+1})$ we get a bijection
\begin{eq}
\llabel{2009.10.15.eq2}
\wt{Ob}(CC(R,LM))\cong \coprod_{n\ge 0} LM(\emptyset)\times\dots\times LM(\{1,\dots,n\})\times R(\{1,\dots,n\})
\end{eq}
\end{proof}
Using the notations of type theory we can write elements of $Ob(CC(R,LM))$ as 
$$\Gamma=(T_1,\dots,T_n\rhd)$$
where $T_i\in LM(\{1,\dots,i-1\})$ and the elements of $\wt{Ob}(CC(R,LM))$ as 
$${\cal J} = (T_1,\dots,T_n\vdash t:T)$$
where $T_i\in LM(\{1,\dots,i-1\})$, $T\in LM(\{1,\dots,n\})$ and $t\in R(\{1,\dots,n\})$.

In this notation the operations $T,\wt{T},S,\wt{S}$ and $\delta$ which were introduced in \cite{Csubsystems} take the form:
\begin{enumerate}
\item $T((\Gamma,T_{n+1}\rhd),(\Gamma,\Delta\rhd))=(\Gamma,T_{n+1},t_{n+1}(\Delta)\rhd)$ when $l(\Gamma)=n$,
\item $\wt{T}((\Gamma,T_{n+1}\rhd),(\Gamma,\Delta\vdash r:R))=(\Gamma,T_{n+1},t_{n+1}(\Delta)\vdash t_{n+1}(r:R))$ when $l(\Gamma)=n$,
\item $S((\Gamma\vdash s:S),(\Gamma,S,\Delta\rhd))=(\Gamma,s_{n+1}(\Delta[s/n+1])\rhd)$ when $l(\Gamma)=n$,
\item $\wt{S}((\Gamma\vdash s:S),(\Gamma,S,\Delta\vdash r:R))=(\Gamma,s_{n+1}(\Delta[s/n+1])\vdash s_{n+1}((r:R)[s/n+1])$ when $l(\Gamma)=n$,
\item $\delta(\Gamma,T\rhd)=(\Gamma,T\vdash (n+1):T)$ when $l(\Gamma)=n$.
\end{enumerate}

\begin{remark}\rm
\llabel{2014.09.28.rm1}
One can easily construct on the function $(R,LM)\mapsto CC(R,LM)$ the structure of a functor from the ``large module category'' of \cite{HM2008} to the category of C-systems and their homomorphisms.
\end{remark}

\subsection{C-subsystems of $CC(R,LM)$.}
Let $CC$ be a C-subsystem of $CC(R,LM)$.  By \cite{Csubsystems} $CC$ is determined by the subsets $C=Ob(CC)$ and $\wt{C}=\wt{Ob}(CC)$ in $Ob(CC(R,LM))$ and $\wt{Ob}(CC(R,LM))$. 

For $\Gamma=(E_1,\dots,E_n)$ we write $(\Gamma\rhd_{C})$ if $(E_1,\dots,E_n)$ is in $C$ and $(\Gamma\vdash_{\wt{C}} t:T)$  if  $(E_1,\dots,E_n,T,t)$ is in $\wt{C}$. 

The following result is an immediate corollary of \cite[Proposition 4.3]{Csubsystems} together with the description of the operations $T,\wt{T},S,\wt{S}$ and $\delta$ for $CC(R,LM)$ which is given above. 
\begin{proposition}
\llabel{2009.10.16.prop3}
Let $(R,LM)$ be a monad on $Sets$ and a left module over it with values in $Sets$.  A pair of subsets 
$$C\subset \coprod_{n\ge 0} \prod_{i=0}^{n-1} LM(\{1,\dots,i\})$$
$$\wt{C}\subset \coprod_{n\ge 0}  (\prod_{i=0}^{n} LM(\{1,\dots,i\}))\times R(\{1,\dots,n\})$$
corresponds to a C-subsystem $CC$ of $CC(R,LM)$  if and only if the following conditions hold:
\begin{enumerate}
\item $(\rhd_{C})$
\item $(\Gamma, T\rhd_{C})\Rightarrow (\Gamma\rhd_{C})$
\item $(\Gamma\vdash_{\wt{C}} r:R)\Rightarrow (\Gamma,R\rhd_{C})$
\item $(\Gamma, T\rhd_{C})\wedge(\Gamma,\Delta,\vdash_{\wt{C}} r:R)\Rightarrow  (\Gamma, T, t_{n+1}(\Delta)\vdash_{\wt{C}} t_{n+1} (r: R))$
where $n=l(\Gamma_1)$
\item  $(\Gamma\vdash_{\wt{C}}  s:S)\wedge (\Gamma,S,\Delta\vdash_{\wt{C}} r:R)\Rightarrow (\Gamma, s_{n+1}(\Delta[s/n+1]) \vdash_{\wt{C}} s_{n+1} (( r : R ) [s/n+1]))$ where $n=l(\Gamma_1)$,
\item $(\Gamma,T\rhd_{C})\Rightarrow (\Gamma,T\vdash_{\wt{C}} n+1:T)$ where $n=l(\Gamma)$.
\end{enumerate}
\end{proposition}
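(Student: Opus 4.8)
The plan is to obtain Proposition \ref{2009.10.16.prop3} as a direct specialization of the abstract characterization of C-subsystems proved in \cite[Proposition 4.3]{Csubsystems}. That result describes, for an arbitrary C-system $CC$, exactly which pairs of subsets $C\subset Ob(CC)$ and $\wt{C}\subset \wt{Ob}(CC)$ arise as $(Ob(CC'),\wt{Ob}(CC'))$ for a C-subsystem $CC'\subset CC$: the requirements are that $C$ contain the final object, that $C$ be closed under $ft$, that the codomain object of every element of $\wt{C}$ lie in $C$, and that $(C,\wt{C})$ be jointly closed under the operations $T,\wt{T},S,\wt{S}$ and $\delta$. Since the explicit form of these five operations on $CC(R,LM)$ has just been computed (items 1--5 preceding the statement), the entire content of the proof is the transcription of those abstract conditions into the concrete implications (1)--(6).

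First I would fix the identifications already in hand: an element of $Ob(CC(R,LM))$ is a tuple $(T_1,\dots,T_n)$ with $T_i\in LM(\{1,\dots,i-1\})$, and an element of $\wt{Ob}(CC(R,LM))$ is a tuple $(T_1,\dots,T_n,T,t)$ via the bijection of Lemma \ref{2014.06.30.l2}, with $ft(T_1,\dots,T_n)=(T_1,\dots,T_{n-1})$ and with the codomain object of the section named by $(\dots,T,t)$ equal to $(\dots,T)$. Under this dictionary the three ``boundary'' clauses of \cite[Proposition 4.3]{Csubsystems} become conditions (1), (2), (3): the final object is the unique element of $Ob_0$, so its membership is $(\rhd_{C})$; downward closure under $ft$ is $(\Gamma,T\rhd_{C})\Rightarrow(\Gamma\rhd_{C})$; and admissibility of a section's codomain is $(\Gamma\vdash_{\wt{C}}r:R)\Rightarrow(\Gamma,R\rhd_{C})$. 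Substituting the listed formulas for $\wt{T}$, $\wt{S}$ and $\delta$ then reads off closure of $\wt{C}$ under $\wt{T}$ as condition (4), closure under $\wt{S}$ as condition (5), and closure under $\delta$ as condition (6), the hypotheses on the inputs and the index bookkeeping $n=l(\Gamma)$ matching verbatim.

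The one point requiring care, and the place I expect any subtlety to hide, is that conditions (1)--(6) carry no separate clause asserting closure of $C$ under the object-level operations $T$ and $S$, even though a C-subsystem is certainly closed under them as well. This is not an omission: in the framework of \cite{Csubsystems} closure of the object part under $T$ and $S$ is a formal consequence of closure under $ft$, $\delta$, $\wt{T}$ and $\wt{S}$, since the generic sections produced by $\delta$ allow one to read off the weakening and substitution of objects from the corresponding operations on terms. As this reduction is already built into the statement of \cite[Proposition 4.3]{Csubsystems}, I would not reprove it here. The remaining work is then purely the mechanical verification that the substituted formulas reproduce (1)--(6) with the correct indices and with the substitution conventions $t_{n+1}$, $s_{n+1}$ and $[s/n+1]$; once this dictionary is confirmed the proposition follows immediately.
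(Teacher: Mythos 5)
Your proposal is correct and matches the paper's own treatment: the paper presents this proposition as an immediate corollary of \cite[Proposition 4.3]{Csubsystems} combined with the explicit formulas for $T,\wt{T},S,\wt{S}$ and $\delta$ on $CC(R,LM)$, exactly the transcription you describe. Your observation that closure of $C$ under the object-level operations is a formal consequence rather than an omission is also the paper's point, recorded immediately after the proposition as conditions ({\em 4a}) and ({\em 5a}).
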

Note that conditions (4) and (5) together with condition (6) and condition (3) imply the following 
\begin{description}
\item[{\em 4a}] $(\Gamma, T\rhd_{C})\wedge (\Gamma,\Delta\rhd_{C})\Rightarrow (\Gamma, T, t_{n+1}(\Delta)\rhd_{C})$ where $n=l(\Gamma_1)$,
\item[{\em 5a}] $(\Gamma\vdash_{\wt{C}}  s:S)\wedge (\Gamma,S,\Delta\rhd_{C})\Rightarrow (\Gamma, s_{n+1}(\Delta[s/n+1])\rhd_{C})$ where $n=l(\Gamma_1)$.
\end{description}
Note also that modulo condition (2), condition (1) is equivalent to the condition that $C\ne\emptyset$. 

\begin{remark}\rm\llabel{2010.08.07.rem1} If one re-writes the conditions of Proposition \ref{2009.10.16.prop3} in the more familiar in type theory form where the variables introduced in the context are named rather than directly numbered one arrives at the following rules:

\begin{center}

$$\frac{}{\rhd_{C}}\,\,\,\,\,\,\,\,\,\,
\frac{x_1:T_1,\dots,x_n:T_n\rhd_{C}}{x_1:T_1,\dots,x_{n-1}:T_{n-1}\rhd_{C}} \,\,\,\,\,\,\,\,\,\, 
\frac{x_1:T_1,\dots,x_n:T_n\vdash_{\wt{C}} t:T}{x_1:T_1,\dots,x_n:T_n, y:T\rhd_{C}}$$

$$\frac{x_1:T_1,\dots,x_n:T_n, y:T\rhd_{C}\,\,\,\,\,\,\,x_1:T_1,\dots,x_n:T_n,\dots, x_m:T_m\vdash_{\wt{C}} r:R}{x_1:T_1,\dots,x_n:T_n, y:T, x_{n+1}:T_{n+1},\dots,x_m:T_m\vdash_{\wt{C}} r:R}$$

$$\frac{x_1:T_1,\dots,x_n:T_n\vdash_{\wt{C}} s:S\,\,\,\,\,\,\,x_1:T_1,\dots,x_n:T_n,y:S,x_{n+1}:T_{n+1},\dots,x_m:T_m\vdash_{\wt{C}} r:R}
{x_1:T_1,\dots,x_n:T_n,x_{n+1}:T_{n+1}[s/y],\dots,x_m:T_m[s/y]\vdash_{\wt{C}} (r:R)[s/y]}$$

$$\frac{x_1:E_1,\dots,x_n:E_n\rhd_{C}}{x_1:E_1,\dots,x_n:E_n\vdash_{\wt{C}} x_n:E_n}$$

\end{center}
which are similar (and probably equivalent) to the ``basic rules of DTT'' given in \cite[p.585]{Jacobs1}. The advantage of the rules given here is that they are precisely the ones which are necessary and sufficient for a given collection of contexts and judgements to define a C-system.

\end{remark}

\begin{lemma}
\llabel{2009.11.05.l1}
Let $CC$ be as above and let $(E_1,\dots, E_m), (T_1,\dots,T_n)\in Ob(CC)$ and $(f_1,\dots,f_n)\in R(\{1,\dots,m\})^n$. Then  
$$(f_1,\dots,f_n)\in Hom_{CC}((E_1,\dots, E_m), (T_1,\dots,T_n))$$
if and only if $(f_1,\dots,f_{n-1})\in Hom_{CC}((E_1,\dots, E_m), (T_1,\dots,T_{n-1}))$ and 
$$E_1,\dots,E_m\vdash_{\wt{C}} f_n : T_{n}(f_1/1,\dots,f_{n-1}/n-1)$$
\end{lemma}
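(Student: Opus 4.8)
The plan is to exploit the canonical factorization of a morphism through its codomain's projection tower, made explicit in $CC(R,LM)$. Write $g=(f_1,\dots,f_n)$, $X=(T_1,\dots,T_n)$, $\Gamma=(E_1,\dots,E_m)$ and $h=(f_1,\dots,f_{n-1})$. The first thing I would record is the composition formula in $CC(R,LM)$: the canonical projection $p_X:X\to ft(X)$ is the morphism $(1,\dots,n-1)$, and composing $g$ with it just drops the last coordinate, so that $g\circ p_X=h$. This is a one-line $bind$ computation using the monad laws of Lemma \ref{2014.06.30.l1} (composition in $C(R)$ is Kleisli composition read in $(R-cor)^{op}$).

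Next I would form the canonical pullback square of $p_X$ along $h$. By (\ref{2009.11.05.oldeq1}) its upper-left corner is $Y:=(E_1,\dots,E_m,T_n(f_1/1,\dots,f_{n-1}/n-1))$, its top edge is $q:=(f_1,\dots,f_{n-1},m+1):Y\to X$, and its left edge is the projection $p_Y=(1,\dots,m):Y\to\Gamma$. Since $g\circ p_X=h=Id_\Gamma\circ h$, the universal property of this pullback yields a unique $s:\Gamma\to Y$ with $s\circ q=g$ and $s\circ p_Y=Id_\Gamma$; thus $s$ is a section, hence an element of $\wt{Ob}(CC(R,LM))$. By the proof of Lemma \ref{2014.06.30.l2} the first $m$ coordinates of $s$ are forced to be $1,\dots,m$, and matching $s\circ q=g$ coordinate-by-coordinate (again a short $bind$ computation, using $bind(s)$ applied to the images of $f_1,\dots,f_{n-1}$ and to $\eta(m+1)$) shows that the last coordinate of $s$ is exactly $f_n$. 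Under the bijection of Lemma \ref{2014.06.30.l2}, $s$ therefore corresponds to $(E_1,\dots,E_m,T_n(f_1/1,\dots,f_{n-1}/n-1),f_n)$, which is precisely what the judgement $E_1,\dots,E_m\vdash f_n:T_n(f_1/1,\dots,f_{n-1}/n-1)$ abbreviates. This establishes the factorization $g=s\circ q$ together with the dictionary between $s$ and the $\wt{C}$-membership assertion.

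Both directions then follow from the closure properties of a C-subsystem. For ($\Rightarrow$), if $g\in Hom_{CC}$ then $h=g\circ p_X\in Hom_{CC}$ since $CC$ is closed under canonical projections and composition; because $X,\Gamma\in Ob(CC)$ and $h\in Hom_{CC}$, closure under canonical pullbacks makes the square above a canonical square of $CC$, which by the C-system axioms is a pullback in $CC$. Applying its universal property inside $CC$ to the cone $(g,Id_\Gamma)$ produces a morphism $s'\in Mor(CC)$ with $s'\circ q=g$ and $s'\circ p_Y=Id_\Gamma$; by uniqueness of such a morphism in the ambient $CC(R,LM)$ we get $s'=s$, so $s\in\wt{Ob}(CC)=\wt{C}$, giving the judgement. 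For ($\Leftarrow$), the $\wt{C}$-hypothesis means $s\in\wt{Ob}(CC)$, whence $Y\in Ob(CC)$ and $s\in Mor(CC)$; the $Hom$-hypothesis $h\in Hom_{CC}$ together with closure under canonical pullbacks gives $q\in Mor(CC)$; hence $g=s\circ q\in Mor(CC)$, with domain $\Gamma$ and codomain $X$, so $g\in Hom_{CC}(\Gamma,X)$.

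I expect the only delicate step to be the explicit identification in the second paragraph: verifying through the $bind$/monad laws that $g\circ p_X=h$ and that the last coordinate of the section $s$ is $f_n$ with the correctly substituted type $T_n(f_1/1,\dots,f_{n-1}/n-1)$. This is where the abstract C-system decomposition must be matched against the concrete descriptions of morphisms, objects and $\wt{Ob}$ in $CC(R,LM)$; once that dictionary is in place the closure arguments are purely formal.
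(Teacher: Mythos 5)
Your argument is correct and is exactly the route the paper intends: its proof is the one-line remark that the claim is ``straightforward using the fact that the canonical pull-back squares in $CC(R,LM)$ are given by (\ref{2009.11.05.oldeq1})'', and your factorization $g=s\circ q$ through the canonical square over $h=g\circ p_X$, together with the identification of the section $s$ via Lemma \ref{2014.06.30.l2} and the closure properties of C-subsystems, is precisely that argument spelled out. No gaps.
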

\begin{proof}
Straightforward using the fact that the canonical pull-back squares in $CC(R,LM)$ are given by (\ref{2009.11.05.oldeq1}).
\end{proof}
\begin{example}\rm
The category $CC(R,R)$ for the identity monad is empty. For the monad of the form $R(X)=pt$ the C-system $CC(R,R)$ has only two subsystems - itself and the trivial one for which $C={pt}$. 

The first non-trivial example is the monad $R(X)=X\amalg \{*\}$. We conjecture that in this case the set of all subsystems of $CC(R,R)$ is {\em uncountable}.

One can probably show this as follows. Let $\epsilon:\nn\sr\{0,1\}$, be a sequence of $0$'s and $1$'s. Consider the C-subsystem of $CC_{\epsilon}$ of $CC(R,R)$ which is generated by the set of elements of the form $(*, 1, 2, \dots, n\rhd)\in Ob(CC(R,R))$ for all $n\ge 0$ and elements $(*,1,\dots,n+1\vdash n+2:*)\in \wt{Ob}(CC(R,R))$ for $n$ such that $\epsilon(n)=1$. 

It should be possible to show that $CC_{\epsilon}\ne CC_{\epsilon'}$ for $\epsilon\ne \epsilon'$ which would imply the conjecture. 
\end{example}

\subsection{Operations $\sigma$ and $\wt{\sigma}$ on $CC(R,LM)$.}
C-systems of the form $CC(R,LM)$ have an important additional structure which will play a role in the next section. This structure is given by two operations:
\begin{enumerate}
\item for $\Gamma=(T_1,\dots,T_n,\dots,T_{n+i})$ and $\Gamma'=(T_1',\dots,T'_{n})$ we set
$$\sigma(\Gamma,\Gamma')=(T_1',\dots,T'_n,T_{n+1},\dots,T_{n+i})$$
This gives us an operation with values in $Ob$ defined on the subset of $Ob\times Ob$ which consists of pairs $(\Gamma,\Gamma')$ such that $l(\Gamma)>l(\Gamma')$,
\item for ${\cal J}=(T_1,\dots,T_{n-1},\dots,T_{n-1+i}\vdash t:T_{n+i})$, $\Gamma'=(T_1',\dots,T_n')$ we set
$$\wt{\sigma}({\cal J},\Gamma')=
\left\{ 
\begin{array}{ll}
(T_1',\dots,T_n',T_{n+1},\dots,T_{n+i-1}:t:T_{n+i})&\mbox{\rm for $i>0$}\\
(T_1',\dots,T_{n-1}'\vdash t:T_n')&\mbox{\rm for $i=0$}
\end{array}
\right.
$$
This gives us an operation with values in $\wt{Ob}$ defined on the subset of $\wt{Ob}\times Ob$ which consists of pairs $({\cal J},\Gamma')$ such that $l(\partial({\cal J}))\le l(\Gamma')$.
\end{enumerate}

\subsection{Regular sub-quotients of $CC(R,LM)$.} 

Let $(R,LM)$ be as above and
$$Ceq\subset \coprod_{n\ge 0}  (\prod_{i=0}^{n-1} LM(\{1,\dots,i\}))\times LM(\{1,\dots,n\})^2$$
$$\wt{Ceq}\subset \coprod_{n\ge 0}  (\prod_{i=0}^{n} LM(\{1,\dots,i\}))\times R(\{1,\dots,n\})^2$$
be two subsets.  

For $\Gamma=(T_1,\dots,T_n)\in ob(CC(R,LM))$ and $S_1,S_2\in LM(\{1,\dots,n\})$ we write $(\Gamma\vdash_{Ceq} S_1=S_2)$ to signify that $(T_1,\dots,T_n,S_1,S_2)\in Ceq$. Similarly for $T\in LM(\{1,\dots,n\})$ and $o,o'\in R(\{1,\dots,n\})$ we write $(\Gamma\vdash_{\wt{Ceq}} o=o':S)$ to signify that $(T_1,\dots,T_n,S,o,o')\in \wt{Ceq}$.  When no confusion is possible we will omit the subscripts $Ceq$ and $\wt{Ceq}$ at $\vdash$. 

Similarly we will write $\rhd$ instead of $\rhd_C$ and $\vdash$ instead of $\vdash_{\wt{C}}$ if the subsets $C$ and $\wt{C}$ are unambiguously  determined by the context.  

\begin{definition}
\llabel{simandsimeq}
Given subsets $C$, $\wt{C}$, $Ceq$, $\wt{Ceq}$ as above define relations $\sim$ on $C$ and $\simeq$ on $\wt{C}$ as follows:
\begin{enumerate}
\item for $\Gamma=(T_1,\dots,T_n)$, $\Gamma'=(T_1',\dots,T_n')$ in $C$ we set  $\Gamma\sim\Gamma'$ iff $ft(\Gamma)\sim ft(\Gamma')$ and 
$$T_1,\dots,T_{n-1}\vdash T_n=T_n',$$
\item for $(\Gamma\vdash o:S)$, $(\Gamma'\vdash o':S')$ in $\wt{C}$ we set $(\Gamma\vdash o:S)\simeq(\Gamma'\vdash o':S')$ iff $(\Gamma,S)\sim(\Gamma',S')$ and 
$$(\Gamma\vdash o=o':S).$$
\end{enumerate}
\end{definition}

\begin{proposition}
\llabel{2014.07.10.prop1}
Let $C$, $\wt{C}$, $Ceq$, $\wt{Ceq}$ be as above and suppose in addition that one has:
\begin{enumerate}
\item $C$ and $\wt{C}$ satisfy conditions (1)-(6) of Proposition \ref{2009.10.16.prop3} which are referred to below as conditions (1.1)-(1.6) of the present proposition,
\item 
$$
\begin{array}{l}
(a)\spc(\Gamma\vdash T=T')\impl (\Gamma,T\rhd)\\
(b)\spc(\Gamma,T\rhd)\impl (\Gamma\vdash T=T)\\
(c )\spc(\Gamma\vdash T=T')\impl(\Gamma\vdash T'=T)\\
(d)\spc(\Gamma\vdash T=T')\wedge(\Gamma\vdash T'=T'')\impl(\Gamma\vdash T=T'')
\end{array}
$$
\item 
$$
\begin{array}{l}
(a)\spc(\Gamma\vdash o=o':T)\impl (\Gamma\vdash o:T)\\
(b)\spc(\Gamma\vdash o:T)\impl (\Gamma\vdash o=o:T)\\
(c )\spc(\Gamma\vdash o=o':T)\impl(\Gamma\vdash o'=o:T)\\
(d)\spc (\Gamma\vdash o=o':T)\wedge(\Gamma\vdash o'=o'':T)\impl(\Gamma\vdash o=o'':T)
\end{array}
$$
\item 
$$
\begin{array}{l}
(a)\spc (\Gamma_1\vdash T=T')\wedge(\Gamma_1,T,\Gamma_2\vdash S=S')\impl(\Gamma_1,T',\Gamma_2\vdash S=S')\\
(b)\spc (\Gamma_1\vdash T=T')\wedge(\Gamma_1,T,\Gamma_2\vdash o=o':S)\impl(\Gamma_1,T',\Gamma_2'\vdash o=o':S)\\
(c )\spc (\Gamma\vdash S=S')\wedge(\Gamma\vdash o=o':S)\impl(\Gamma\vdash o=o':S')
\end{array}
$$
\item 
$$
\begin{array}{ll}
(a)\spc (\Gamma_1,T\rhd)\wedge(\Gamma_1,\Gamma_2\vdash S=S')\impl(\Gamma_1,T,t_{i+1}\Gamma_2\vdash t_{i+1}S=t_{i+1}S')& i=l(\Gamma)\\
(b)\spc (\Gamma_1,T\rhd)\wedge(\Gamma_1,\Gamma_2\vdash o=o':S)\impl(\Gamma_1,T,t_{i+1}\Gamma_2\vdash t_{i+1}o=t_{i+1}o':t_{i+1}S)& i=l(\Gamma)
\end{array}
$$
\item
$$
\begin{array}{ll}
(a)\spc (\Gamma_1,T,\Gamma_2\vdash S=S')\wedge(\Gamma_1\vdash r:T)\impl&\\
(\Gamma_1,s_{i+1}(\Gamma_2[r/i+1])\vdash s_{i+1}(S[r/i+1])=s_{i+1}(S'[r/i+1]))&i=l(\Gamma_1)\\
(b)\spc (\Gamma_1,T,\Gamma_2\vdash o=o':S)\wedge(\Gamma_1\vdash r:T)\impl&\\
(\Gamma_1,s_{i+1}(\Gamma_2[r/i+1])\vdash s_{i+1}(o[r/i+1])=s_{i+1}(o'[r/i+1]):s_{i+1}(S[r/i+1]))&i=l(\Gamma_1)
\end{array}
$$
\item 
$$
\begin{array}{ll}
(a)\spc (\Gamma_1,T,\Gamma_2,S\rhd)\wedge(\Gamma_1\vdash r=r':T)\impl&\\
(\Gamma_1,s_{i+1}(\Gamma_2[r/i+1])\vdash s_{i+1}(S[r/i+1])=s_{i+1}(S[r'/i+1]))&i=l(\Gamma_1)\\
(b)\spc (\Gamma_1,T,\Gamma_2\vdash o:S)\wedge(\Gamma_1\vdash r=r':T)\impl&\\
(\Gamma_1,s_{i+1}(\Gamma_2[r/i+1])\vdash s_{i+1}(o[r/i+1])=s_{i+1}(o[r'/i+1]):s_{i+1}(S[r/i+1]))&i=l(\Gamma_1)
\end{array}
$$
\end{enumerate}
Then the relations $\sim$ and $\simeq$ are equivalence relations on $C$ and $\wt{C}$ which satisfy the conditions of \cite[Proposition 5.4]{Csubsystems} and therefore they correspond to a regular congruence relation on the C-system defined by $(C,\wt{C})$. 
\end{proposition}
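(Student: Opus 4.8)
The plan is to split the statement into two tasks: first, to show that $\sim$ and $\simeq$ are genuine equivalence relations; second, to check that, once this is known, the pair $(\sim,\simeq)$ satisfies the list of structural compatibilities required by \cite[Proposition 5.4]{Csubsystems}, so that the cited proposition delivers the desired regular congruence. Both tasks proceed by induction on the length $l(\Gamma)$, exploiting that $\sim$ is prefix-closed: $\Gamma\sim\Gamma'$ forces $ft(\Gamma)\sim ft(\Gamma')$ by the very definition of $\sim$, and iterating this, all truncations of $\Gamma$ and $\Gamma'$ are componentwise related by the type-equalities $(T_1,\dots,T_{i-1}\vdash T_i=T_i')$.

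The technical heart, which I would isolate as a lemma before anything else, is a \emph{context-conversion} statement: if $\Delta\sim\Delta'$ and $(\Delta,\Gamma_2\rhd)$, then every equality judgement (of the form $\vdash S=S'$ or $\vdash o=o':T$) holding over $(\Delta,\Gamma_2)$ also holds over $(\Delta',\Gamma_2)$. I would prove this by induction on $l(\Delta)$. In the inductive step, writing $\Delta=(\Delta_0,T)$ and $\Delta'=(\Delta_0',T')$, the induction hypothesis first transports the given judgement from $(\Delta_0,T,\Gamma_2)$ to $(\Delta_0',T,\Gamma_2)$ and separately transports the witnessing equality $(\Delta_0\vdash T=T')$ to $(\Delta_0'\vdash T=T')$; the type-conversion clauses 4(a)/4(b) then replace $T$ by $T'$ in the last slot, landing over $(\Delta_0',T',\Gamma_2)=(\Delta',\Gamma_2)$. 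Crucially this argument only ever transports in the forward direction along $\sim$, so it does not presuppose symmetry of $\sim$ and there is no circularity.

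With context-conversion in hand the equivalence-relation properties fall out. For $\sim$: reflexivity is immediate from clause 2(b) applied at each level; symmetry follows by applying 2(c) to each componentwise type-equality $(T_1,\dots,T_{i-1}\vdash T_i=T_i')$ and then using context-conversion to move the resulting judgement $(T_1,\dots,T_{i-1}\vdash T_i'=T_i)$ over to the context $(T_1',\dots,T_{i-1}')$; transitivity is analogous using 2(d). For $\simeq$: the ``context-and-type'' part $(\Gamma,S)\sim(\Gamma',S')$ is governed by $\sim$, already shown to be an equivalence, while the ``object'' part is handled by clauses 3(b)--(d); here symmetry and transitivity again require transporting an object-equality judgement between $(\Gamma,S)$ and $(\Gamma',S')$, which is exactly what context-conversion (now legitimately used in both directions, since $\sim$ is symmetric) together with clause 4(c) supplies.

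It remains to match $(\sim,\simeq)$ against the hypotheses of \cite[Proposition 5.4]{Csubsystems}. Conditions (1.1)--(1.6) guarantee that $(C,\wt{C})$ is a genuine C-subsystem, compatibility with $ft$ and with domain and codomain is built into the definitions and the length bookkeeping, and the substantive remaining requirements are that $\sim$ and $\simeq$ be congruences for the operations $T,\wt{T},S,\wt{S},\delta$ of the present section. These correspond term-by-term to the remaining hypotheses: clause 5 yields congruence under the weakening operations $T,\wt{T}$; clauses 6 and 7 together yield congruence under the substitution operations $S,\wt{S}$, clause 6 varying the datum substituted into and clause 7 varying the substituted term; and $\delta$-compatibility follows from 5(a) and 3(b). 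In each case the verification combines the relevant clause with reflexivity and a single application of the context-conversion lemma to align the two chosen representatives. I expect the context-conversion lemma and its careful interplay with clauses 4(a)--(c) to be the main obstacle; everything downstream is a bookkeeping assembly using the explicit formulas for $t_{i+1}$, $s_{i+1}$ and $[r/i+1]$ recorded for $CC(R,LM)$.
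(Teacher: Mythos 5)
Your proposal follows essentially the same route as the paper: the same context-conversion lemma (the paper's Lemma \ref{iseqrelsiml1}, proved by the same one-component-at-a-time replacement using 4(a)/4(b) and transporting the witnessing equalities by the outer induction), the same derivation of reflexivity, symmetry and transitivity from clauses 2 and 3, and the same clause-to-operation matching for compatibility with $T,\wt{T},S,\wt{S},\delta$ (including 5(a) plus 3(b) for $\delta$, and the clause-6-then-clause-7 chain glued by 2(d)/3(d) for $S,\wt{S}$, as in the paper's Lemma \ref{TSetc}). The one point you gloss over is the explicit verification of the lifting conditions (3)--(4) of \cite[Proposition 5.4]{Csubsystems} (the paper's Lemma \ref{surjl1}); these are not purely definitional, but they follow in two lines from 2(b) (resp.\ 3(b)), your context-conversion lemma, and 2(a) (resp.\ 3(a)).
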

\begin{lemma}
\llabel{iseqrelsiml1}
One has:
\begin{enumerate}
\item If conditions (1.2), (4a) of the proposition hold then $(\Gamma\vdash S=S')\wedge(\Gamma\sim\Gamma')\impl (\Gamma'\vdash S=S')$.
\item If conditions (1.2), (1.3), (4a), (4b), (4c) hold then $(\Gamma\vdash o=o':S)\wedge((\Gamma,S)\sim(\Gamma',S'))\impl (\Gamma'\vdash o=o':S')$.
\end{enumerate}
\end{lemma}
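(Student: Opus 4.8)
The plan is to prove the two parts by induction on the common length $n$ of the contexts, with part~(1) doing the real work and part~(2) reducing to it. First I would record the explicit content of $\Gamma\sim\Gamma'$: writing $\Gamma=(T_1,\dots,T_n)$ and $\Gamma'=(T_1',\dots,T_n')$, an immediate induction from Definition~\ref{simandsimeq} shows that for every $1\le i\le n$ one has both $(T_1,\dots,T_{i-1})\sim(T_1',\dots,T_{i-1}')$ and $(T_1,\dots,T_{i-1}\vdash T_i=T_i')$. The crucial feature to notice is that each of these type equalities is asserted over the \emph{unprimed} prefix, and that condition~(1.2) is exactly what keeps every prefix inside $C$, so that all the sub-relations $\sim$ invoked here are defined.

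For part~(1) I would induct on $n$, the case $n=0$ being trivial since then $\Gamma'=\Gamma$. In the inductive step I would rewrite the context one entry at a time through the intermediate sequences $\Gamma^{(k)}=(T_1',\dots,T_k',T_{k+1},\dots,T_n)$, so that $\Gamma^{(0)}=\Gamma$ and $\Gamma^{(n)}=\Gamma'$, proving $(\Gamma^{(k)}\vdash S=S')$ by an inner induction on $k$. The passage from $\Gamma^{(k-1)}$ to $\Gamma^{(k)}$ is an instance of condition~(4a) with $\Gamma_1=(T_1',\dots,T_{k-1}')$, $T=T_k$, $T'=T_k'$ and $\Gamma_2=(T_{k+1},\dots,T_n)$; it requires the premise $(T_1',\dots,T_{k-1}'\vdash T_k=T_k')$, that is, the equality $(T_1,\dots,T_{k-1}\vdash T_k=T_k')$ \emph{transported to the primed prefix}. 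Since $k-1<n$ and $(T_1,\dots,T_{k-1})\sim(T_1',\dots,T_{k-1}')$, this transport is precisely the outer induction hypothesis, namely part~(1) at length $k-1$.

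For part~(2) I would first establish the term-equality analogue of part~(1), $(\Gamma\vdash o=o':T)\wedge(\Gamma\sim\Gamma')\Rightarrow(\Gamma'\vdash o=o':T)$, by the same entry-by-entry rewriting of the context, now invoking condition~(4b) at each replacement in place of~(4a). Notice that this analogue needs no further induction on length: each replacement step asks only for a \emph{type} equality over a primed prefix, and that is supplied by part~(1), which is already available. Granting it, I would unfold the hypothesis $(\Gamma,S)\sim(\Gamma',S')$ into $\Gamma\sim\Gamma'$ together with $(\Gamma\vdash S=S')$, and then finish in two moves: apply condition~(4c) to turn $(\Gamma\vdash o=o':S)$ into $(\Gamma\vdash o=o':S')$, and then apply the term-equality analogue to turn $(\Gamma\vdash o=o':S')$ into $(\Gamma'\vdash o=o':S')$. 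The type must be changed before the context, because the equality $(\Gamma\vdash S=S')$ is only available over the unprimed context $\Gamma$. Throughout, conditions~(1.2) and~(1.3) are what certify that the contexts occurring in the recursion lie in $C$, so that every $\sim$ and every judgement written down is legitimate.

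The hard part will be the apparent circularity in the rewriting steps: both~(4a) and~(4b) insist that the type equality licensing the $k$-th replacement be stated over the \emph{already rewritten} primed prefix $(T_1',\dots,T_{k-1}')$, whereas the unfolding of $\sim$ delivers it only over the \emph{original} unprimed prefix $(T_1,\dots,T_{k-1})$. Reconciling the two is exactly what forces the induction on context length — part~(1) at length $k-1$ is the device that converts the given unprimed equality into the primed one needed at step $k$ — and it is also the reason part~(1) must be proved before the term-level statement used in part~(2). The remaining work is bookkeeping to check that the intermediate contexts and judgements stay well-formed at each stage.
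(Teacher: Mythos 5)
Your proposal is correct and follows essentially the same argument as the paper: an outer induction on the length of $\Gamma$ combined with an inner entry-by-entry rewriting of the context via condition (4a), with the outer inductive hypothesis supplying the transported prefix equality $(T_1',\dots,T_{k-1}'\vdash T_k=T_k')$, and part (2) reduced to a term-level analogue proved with (4b) plus one application of (4c). The only (immaterial) difference is that in part (2) you apply (4c) over $\Gamma$ before transporting the context, whereas the paper transports the context first and then applies (4c) over $\Gamma'$ using part (1) to obtain $(\Gamma'\vdash S=S')$.
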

\begin{proof}
By induction on $n=l(\Gamma)=l(\Gamma')$.

(1) For $n=0$ the assertion is obvious. Therefore by induction we may assume that $(\Gamma\vdash S=S')\wedge(\Gamma\sim\Gamma')\impl (\Gamma'\vdash S=S')$ for all $i<n$ and all appropriate $\Gamma$,$\Gamma'$, $S$ and $S'$ and that $(T_1,\dots,T_n\vdash S=S')\wedge(T_1,\dots,T_n\sim T'_1,\dots,T'_n)$ holds and we need to show that $(T'_1,\dots,T'_n\vdash S=S')$ holds. Let us show by induction on $j$ that $(T'_1,\dots,T'_j,T_{j+1},\dots,T_n\vdash S=S')$ for all $j=0,\dots,n$. For $j=0$ it is a part of our assumptions. By induction we may assume that $(T'_1,\dots,T'_j,T_{j+1},\dots,T_n\vdash S=S')$. By definition of $\sim$ we have $(T_1,\dots,T_j\vdash T_{j+1}=T'_{j+1})$. By the inductive assumption we have $(T'_1,\dots,T'_j\vdash T_{j+1}=T'_{j+1})$. Applying (4a) with $\Gamma_1=(T'_1,\dots T'_j)$, $T=T_{j+1}$, $T'=T'_{j+1}$ and $\Gamma_2=(T_{j+2},\dots,T_n)$ we conclude that $(T'_1,\dots,T'_{j+1},T_{j+2},\dots,T_n\vdash S=S')$.

(2)  By the first part of the lemma we have $\Gamma'\vdash S=S'$. Therefore by (4c) it is sufficient to show that $(\Gamma\vdash o=o':S)\wedge(\Gamma\sim\Gamma')\impl (\Gamma'\vdash o=o':S)$. The proof of this fact is similar to the proof of the first part of the lemma using (4b) instead of (4a).  
\end{proof}
\begin{lemma}
\llabel{iseqrelsim}
One has:
\begin{enumerate}
\item Assume that conditions (1.2), (2b), (2c), (2d) and (4a) hold. Then $\sim$ is an equivalence relation.
\item Assume that conditions of the previous part of the lemma as well as conditions (1.3), (3b), (3c), (3d), (4b) and (4c) hold. Then $\simeq$ is an equivalence relation. 
\end{enumerate}
\end{lemma}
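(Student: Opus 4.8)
The plan is to verify reflexivity, symmetry and transitivity for $\sim$ first, each by induction on the common length $n=l(\Gamma)=l(\Gamma')$, and then to deduce the three properties for $\simeq$ by unfolding Definition \ref{simandsimeq} and reducing them to the corresponding properties of $\sim$ on the extended contexts together with the ``local'' conditions on term equality. Throughout, the essential tool is the previous Lemma \ref{iseqrelsiml1}, which transports a type or term equality judgement from a context $\Gamma$ to any $\sim$-equivalent context $\Gamma'$; all the conditions needed to invoke it, namely (1.2), (1.3), (4a), (4b), (4c), are among the hypotheses.

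For $\sim$, reflexivity proceeds by induction on $n$: the case $n=0$ is immediate, and for the step I would note that $\Gamma=(T_1,\dots,T_n)\in C$ gives $ft(\Gamma)\in C$ by (1.2), whence $ft(\Gamma)\sim ft(\Gamma)$ by the inductive hypothesis, while (2b) applied to $(T_1,\dots,T_{n-1},T_n\rhd)$ supplies $T_1,\dots,T_{n-1}\vdash T_n=T_n$; together these give $\Gamma\sim\Gamma$. For symmetry I would assume $\Gamma\sim\Gamma'$, obtain $ft(\Gamma')\sim ft(\Gamma)$ by induction, apply (2c) to turn $T_1,\dots,T_{n-1}\vdash T_n=T'_n$ into $T_1,\dots,T_{n-1}\vdash T'_n=T_n$, and then use Lemma \ref{iseqrelsiml1}(1) with the equivalence $ft(\Gamma)\sim ft(\Gamma')$ to move this judgement into the context $ft(\Gamma')$, yielding $\Gamma'\sim\Gamma$. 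Transitivity is analogous: given $\Gamma\sim\Gamma'$ and $\Gamma'\sim\Gamma''$, I would first use symmetry (now established) together with Lemma \ref{iseqrelsiml1}(1) to rewrite $T'_1,\dots,T'_{n-1}\vdash T'_n=T''_n$ in the context $ft(\Gamma)$, and then close with the transitivity condition (2d).

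For $\simeq$ I would unfold the definition: $(\Gamma\vdash o:S)\simeq(\Gamma'\vdash o':S')$ asserts $(\Gamma,S)\sim(\Gamma',S')$ together with $\Gamma\vdash o=o':S$. Here (1.3) guarantees $(\Gamma,S)\in C$, so the properties of $\sim$ just proved apply to the extended contexts. Reflexivity then follows from $\sim$-reflexivity and (3b); symmetry follows from $\sim$-symmetry, (3c), and Lemma \ref{iseqrelsiml1}(2) used to transport $\Gamma\vdash o'=o:S$ into $(\Gamma',S')$; transitivity follows from $\sim$-transitivity, from first transporting $\Gamma'\vdash o'=o'':S'$ into $(\Gamma,S)$ via Lemma \ref{iseqrelsiml1}(2) and the $\sim$-symmetry of $(\Gamma,S)\sim(\Gamma',S')$, and then applying (3d).

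The main obstacle, and the reason the bare equivalence-relation conditions (2b)--(2d) and (3b)--(3d) do not suffice on their own, is the context dependence built into Definition \ref{simandsimeq}: an equality such as $T_n=T'_n$ is asserted relative to one representative context, whereas symmetry and transitivity force one to read it off in a different but $\sim$-equivalent context. Lemma \ref{iseqrelsiml1} is exactly what bridges this gap, and it in turn requires the conversion-in-context conditions (4a)--(4c); the only point demanding care is checking that the inductions are arranged so that Lemma \ref{iseqrelsiml1} is always applied to the already-shortened contexts $ft(\Gamma)$, for which the equivalence $ft(\Gamma)\sim ft(\Gamma')$ is furnished directly by the definition of $\Gamma\sim\Gamma'$.
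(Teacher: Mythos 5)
Your proposal is correct and follows essentially the same route as the paper: induction on the length of the context, with Lemma \ref{iseqrelsiml1} used to transport the type/term equality judgements between $\sim$-equivalent contexts, and the conditions (2b)--(2d), (3b)--(3d) supplying reflexivity, symmetry and transitivity within a fixed context. The only (immaterial) difference is the order in which you apply (2c) and the transport step in the symmetry argument; otherwise your write-up simply fills in details the paper leaves as ``similar'' or ``easy''.
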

\begin{proof}
By induction on $n=l(\Gamma)=l(\Gamma')$. 

(1) Reflexivity follows directly from (1.2) and (2b). For $n=0$ the symmetry is obvious. Let $(\Gamma,T)\sim(\Gamma',T')$. By induction we may assume that $\Gamma'\sim\Gamma$. By Lemma \ref{iseqrelsiml1}(a) we have $(\Gamma'\vdash T=T')$ and by (2c) we have $(\Gamma'\vdash T'=T)$. We conclude that $(\Gamma',T')\sim(\Gamma,T)$.  The proof of transitivity is by a similar induction.

(2) Reflexivity follows directly from  reflexivity of $\sim$, (1.3) and (3b). Symmetry and transitivity are also easy using Lemma \ref{iseqrelsiml1}.
\end{proof}
From this point on we assume that all conditions of Proposition \ref{2014.07.10.prop1}  hold. Let $C'=C/\sim$ and $\wt{C}'=\wt{C}/\simeq$. It follows immediately from our definitions that the functions $ft:C\sr C$ and $\partial:\wt{C}\sr C$ define functions $ft':C'\sr C'$ and $\partial':\wt{C}'\sr C'$.
\begin{lemma}
\llabel{surjl1}
The conditions (3) and (4) of \cite[Proposition 5.4]{Csubsystems} hold for $\sim$ and $\simeq$.
\end{lemma}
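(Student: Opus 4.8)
The plan is to read off what conditions (3) and (4) of \cite[Proposition 5.4]{Csubsystems} assert in the present situation and to verify each by an explicit construction. Since $\sim$ and $\simeq$ are already known to be equivalence relations (Lemma \ref{iseqrelsim}) and $ft'$, $\partial'$ have already been seen to be well defined, conditions (3) and (4) are the remaining ``lifting'' (surjectivity) requirements which guarantee that the projection to the quotient is a regular homomorphism: that an object equivalent to $ft(X)$ can be completed to an object equivalent to $X$, and symmetrically that an object equivalent to $\partial(s)$ can be completed to a section equivalent to $s$. Concretely, for condition (3) I must show that given $X=(\Gamma,T)\in C$ and $\Gamma'\in C$ with $\Gamma\sim\Gamma'$ there is $X'\in C$ with $ft(X')=\Gamma'$ and $X\sim X'$; and for condition (4) that given a section $(\Gamma\vdash o:T)\in\wt C$ and $(\Gamma',T')\in C$ with $(\Gamma,T)\sim(\Gamma',T')$ there is a section $s'\in\wt C$ with $\partial(s')=(\Gamma',T')$ and $(\Gamma\vdash o:T)\simeq s'$.

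For condition (3) the natural candidate is $X'=(\Gamma',T)$: the last component $T$ lies in $LM(\{1,\dots,l(\Gamma)\})$, which does not depend on $\Gamma$, so $X'$ is a legitimate element of $Ob(CC(R,LM))$. To see that $X'\in C$ I would start from $(\Gamma,T\rhd)$, obtain $(\Gamma\vdash T=T)$ by reflexivity (hypothesis (2b)), transport this across $\Gamma\sim\Gamma'$ using Lemma \ref{iseqrelsiml1}(1) to get $(\Gamma'\vdash T=T)$, and then apply hypothesis (2a) to conclude $(\Gamma',T\rhd)$. Finally $X\sim X'$ holds by Definition \ref{simandsimeq}, since $\Gamma\sim\Gamma'$ and $(\Gamma\vdash T=T)$. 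Condition (4) is handled in the same spirit with $s'=(\Gamma'\vdash o:T')$: reflexivity of term equality (hypothesis (3b)) gives $(\Gamma\vdash o=o:T)$, Lemma \ref{iseqrelsiml1}(2) transports it across $(\Gamma,T)\sim(\Gamma',T')$ to $(\Gamma'\vdash o=o:T')$, hypothesis (3a) yields $(\Gamma'\vdash o:T')$ so that $s'\in\wt C$, and the defining clauses of $\simeq$ then give $(\Gamma\vdash o:T)\simeq s'$ using the judgement $(\Gamma\vdash o=o:T)$ directly.

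The step I expect to require the most care is precisely the verification that each constructed lift lands inside the subsets $C$ and $\wt C$ rather than merely inside $CC(R,LM)$: this is where the reflexivity hypotheses (2b), (3b), the context-change Lemma \ref{iseqrelsiml1}, and the ``equality implies well-formedness'' hypotheses (2a), (3a) must be chained together, and where one must be attentive to the fact that the two sides a priori live over the distinct (though $\sim$-equivalent) contexts $\Gamma$ and $\Gamma'$. Once the lifts are known to be well defined members of $C$ and $\wt C$, the equivalences $X\sim X'$ and $s\simeq s'$ are immediate from Definition \ref{simandsimeq}, and the remaining bookkeeping is routine.
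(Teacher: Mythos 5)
Your proposal is correct and follows essentially the same route as the paper: take the lift with the same last component ($T'=T$, resp.\ $o'=o$), derive the reflexive equality by (2b), resp.\ (3b), transport it along the context equivalence via Lemma \ref{iseqrelsiml1}, and conclude membership in $C$, resp.\ $\wt{C}$, from (2a), resp.\ (3a). (Your condition labels are in fact the intended ones; the paper's citations ``(1a)'' and ``(2a)'' at the corresponding steps appear to be off by one.)
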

\begin{proof}
1. We need to show that for $(\Gamma,T\rhd)$, and $\Gamma\sim\Gamma'$ there exists $(\Gamma',T'\rhd)$ such that $(\Gamma,T)\sim(\Gamma',T')$. It is sufficient to take $T=T'$. Indeed by (2b) we have $\Gamma\vdash T=T$, by Lemma \ref{iseqrelsiml1}(1) we conclude that $\Gamma'\vdash T=T$ and by (1a) that $\Gamma',T\rhd$.  

2.  We need to show that for $(\Gamma\vdash o:S)$ and $(\Gamma,S)\sim(\Gamma',S')$ there exists $(\Gamma'\vdash o':S')$ such that $(\Gamma'\vdash o':S')\simeq(\Gamma\vdash o:S)$. It is sufficient to take $o'=o$. Indeed, by (3b) we have $(\Gamma\vdash o=o:S)$, by Lemma \ref{iseqrelsiml1}(2) we conclude that $(\Gamma'\vdash o=o:S')$ and by (2a) that $(\Gamma'\vdash o:S')$. 
\end{proof}
\begin{lemma}
\llabel{TSetc}
The equivalence relations $\sim$ and $\simeq$ are compatible with the operations $T,\wt{T},S,\wt{S}$ and $\delta$.
\end{lemma}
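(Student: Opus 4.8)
The plan is to verify compatibility separately for each of the five operations $T,\wt{T},S,\wt{S},\delta$, in each case unfolding the explicit formulas recorded just before Remark~\ref{2014.09.28.rm1} together with Definition~\ref{simandsimeq}, and then matching the resulting \emph{required} equalities against conditions (4)--(7) of Proposition~\ref{2014.07.10.prop1}. Since by Lemma~\ref{iseqrelsim} the relations $\sim$ and $\simeq$ are already equivalence relations, I may freely use symmetry and transitivity as well as the transport Lemma~\ref{iseqrelsiml1}, which lets me carry a type- or term-equality from a context $\Gamma$ to a $\sim$-equivalent context $\Gamma'$.

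I would dispose of the three ``weakening'' operations $T$, $\wt{T}$, $\delta$ first, as these are routine. For $T((\Gamma,T_{n+1}),(\Gamma,\Delta))=(\Gamma,T_{n+1},t_{n+1}\Delta)$, compatibility asks that $(\Gamma,T_{n+1})\sim(\Gamma',T'_{n+1})$ and $(\Gamma,\Delta)\sim(\Gamma',\Delta')$ force $(\Gamma,T_{n+1},t_{n+1}\Delta)\sim(\Gamma',T'_{n+1},t_{n+1}\Delta')$; unfolding the last $\sim$ reduces this to $(\Gamma,T_{n+1}\vdash t_{n+1}\Delta=t_{n+1}\Delta')$, which is exactly condition (5a) applied to $(\Gamma,T_{n+1}\rhd)$ and $(\Gamma\vdash\Delta=\Delta')$. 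For $\wt{T}$ the type component is handled the same way by (5a) and the additional term component $(\Gamma,T_{n+1}\vdash t_{n+1}r=t_{n+1}r':t_{n+1}R)$ by (5b). For $\delta(\Gamma,T)=(\Gamma,T\vdash n+1:T)$ (with the type $T$ read, as usual, as its weakening $t_{n+1}T$ in the extended context) I must produce the $\sim$ on the weakened type, again by (5a), and the reflexivity $(\Gamma,T\vdash n+1=n+1:t_{n+1}T)$, which follows from condition (1.6) and (3b).

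The substitution operations $S$ and $\wt{S}$ are the real content. The difficulty is that here \emph{two} arguments vary simultaneously: both the substituted section (passing from $s$ to $s'$) and the data being substituted into (passing from $\Delta,R,r$ to $\Delta',R',r'$). No single condition covers this, so I would factor each required equality through an intermediate expression obtained by substituting the \emph{old} section $s$ into the \emph{new} data, and glue the two halves by transitivity, (2d) for types and (3d) for terms. Concretely, for $S((\Gamma\vdash s:S),(\Gamma,S,\Delta))=(\Gamma,s_{n+1}(\Delta[s/n+1]))$ the target equality $(\Gamma\vdash s_{n+1}(\Delta[s/n+1])=s_{n+1}(\Delta'[s'/n+1]))$ is obtained by first applying (6a) to $(\Gamma,S\vdash\Delta=\Delta')$ and $(\Gamma\vdash s:S)$, yielding $(\Gamma\vdash s_{n+1}(\Delta[s/n+1])=s_{n+1}(\Delta'[s/n+1]))$, and then applying (7a) to $(\Gamma,S,\Delta'\rhd)$ and $(\Gamma\vdash s=s':S)$, yielding $(\Gamma\vdash s_{n+1}(\Delta'[s/n+1])=s_{n+1}(\Delta'[s'/n+1]))$. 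The operation $\wt{S}$ repeats this for its type component and in addition uses (6b) and (7b) to establish the term-equality $(\Gamma\vdash s_{n+1}(r[s/n+1])=s_{n+1}(r'[s'/n+1]):s_{n+1}(R[s/n+1]))$, glued by (3d).

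I expect the main obstacle to be the bookkeeping needed to apply the ``vary the section'' conditions (7a),(7b) in the second half of the $S,\wt{S}$ arguments: these demand that the intermediate data be well-formed over the \emph{source} context (e.g.\ $(\Gamma,S,\Delta'\rhd)$, not merely the given $(\Gamma',S',\Delta'\rhd)$). Establishing this requires transporting well-formedness backwards along $\sim$ via Lemma~\ref{iseqrelsiml1} together with symmetry of $\sim$, followed by an appeal to (2a). Keeping the context lengths, prefixes, and the indices $n=l(\Gamma)$ aligned through the weakenings $t_{n+1}$ and substitutions $[s/n+1]$ is where all the care is needed, whereas the invocations of conditions (4)--(7) themselves are purely mechanical.
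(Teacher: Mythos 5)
Your proposal is correct and follows essentially the same route as the paper: conditions (5a), (5b) dispose of $T$, $\wt{T}$ and $\delta$, while for $S$ and $\wt{S}$ you factor through the intermediate ``new data, old section'' term exactly as the paper does, gluing with (2d)/(3d) via (6a),(7a) and (6b),(7b), and discharging the well-formedness side condition $(\Gamma,S,\Delta'\rhd)$ by symmetry and (2a). The only detail the paper makes explicit that you compress into ``bookkeeping'' is the induction on the length of the telescope $\Delta$ (resp.\ $\Gamma_2$), which is needed because $\sim$ is defined one component at a time, so statements like $(\Gamma,T_{n+1}\vdash t_{n+1}\Delta=t_{n+1}\Delta')$ must be read as one application of (5a) per component rather than a single judgement.
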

\begin{proof}
(1) Given $(\Gamma_1,T\rhd)\sim(\Gamma_1',T'\rhd)$ and $(\Gamma_1,\Gamma_2\rhd)\sim(\Gamma_1',\Gamma_2'\rhd)$ we have to show that 
$$(\Gamma_1,T,t_{n+1}\Gamma_2)\sim (\Gamma'_1,T',t_{n+1}\Gamma'_2).$$
where $n=l(\Gamma_1)=l(\Gamma_1')$.

Proceed by induction on $l(\Gamma_2)$. For $l(\Gamma_2)=0$ the assertion is obvious. Let  $(\Gamma_1,T\rhd)\sim(\Gamma_1',T'\rhd)$ and $(\Gamma_1,\Gamma_2,S\rhd)\sim(\Gamma_1',\Gamma_2',S'\rhd)$. The later condition is equivalent to $(\Gamma_1,\Gamma_2\rhd)\sim(\Gamma_1',\Gamma_2'\rhd)$  and $(\Gamma_1,\Gamma_2\vdash S=S')$. By the inductive assumption we have $(\Gamma_1,T,t_{n+1}\Gamma_2)\sim (\Gamma'_1,T',t_{n+1}\Gamma'_2)$. By (5a) we conclude that $(\Gamma_1,T,t_{n+1}\Gamma_2\vdash t_{n+1}S=t_{n+1}S')$. Therefore by definition of $\sim$ we have $(\Gamma_1,T,t_{n+1}\Gamma_2,t_{n+1}S)\sim(\Gamma'_1,T',t_{n+1}\Gamma'_2, t_{n+1}S')$.

(2) Given $(\Gamma_1,T\rhd)\sim(\Gamma_1',T'\rhd)$ and $(\Gamma_1,\Gamma_2\vdash o:S)\simeq(\Gamma_1',\Gamma_2'\vdash o':S')$ we have to show that $(\Gamma_1,T,t_{n+1}\Gamma_2\vdash t_{n+1}o:t_{n+1}S)\simeq (\Gamma'_1,T',t_{n+1}\Gamma'_2\vdash t_{n+1}o':t_{n+1}S')$ where $n=l(\Gamma_1)=l(\Gamma_1')$. We have $(\Gamma_1,\Gamma_2,S)\sim(\Gamma_1',\Gamma'_2,S')$ and $(\Gamma_1,\Gamma_2\vdash o=o':S)$. By (5b) we get $(\Gamma_1,T, t_{n+1}\Gamma_2\vdash t_{n+1}o=t_{n+1}o':t_{n+1}S)$. By (1) of this lemma we get $(\Gamma_1,T,t_{n+1}\Gamma_2,t_{n+1}S)\sim(\Gamma'_1,T',t_{n+1}\Gamma'_2,t_{n+1}S')$ and therefore by definition of $\simeq$ we get $(\Gamma_1,T,t_{n+1}\Gamma_2\vdash t_{n+1}o:t_{n+1}S)\simeq (\Gamma'_1,T',t_{n+1}\Gamma'_2\vdash t_{n+1}o':t_{n+1}S')$.

(3) Given $(\Gamma_1\vdash r:T)\simeq(\Gamma_1'\vdash r':T')$ and $(\Gamma_1,T,\Gamma_2\rhd)\sim(\Gamma_1',T',\Gamma_2'\rhd)$ we have to show that 
$$(\Gamma_1,s_{n+1}(\Gamma_2[r/n+1]))\sim(\Gamma'_1,s_{n+1}(\Gamma'_2[r'/n+1])).$$
where $n=l(\Gamma_1)=l(\Gamma_1')$. Proceed by induction on $l(\Gamma_2)$. For $l(\Gamma_2)=0$ the assertion follows directly from the definitions. Let $(\Gamma_1\vdash r:T)\simeq(\Gamma_1'\vdash r':T')$ and $(\Gamma_1,T,\Gamma_2,S\rhd)\sim(\Gamma_1',T',\Gamma_2',S'\rhd)$. The later condition is equivalent to $(\Gamma_1,T,\Gamma_2\rhd)\sim(\Gamma_1',T',\Gamma_2'\rhd)$  and $(\Gamma_1,T,\Gamma_2\vdash S=S')$. By the inductive assumption we have $(\Gamma_1,s_{n+1}(\Gamma_2[r/n+1]))\sim(\Gamma'_1,s_{n+1}(\Gamma'_2[r'/n+1]))$. It remains to show that $(\Gamma_1,s_{n+1}(\Gamma_2[r/n+1])\vdash s_{n+1}(S[r/n+1])=s_{n+1}(S'[r'/n+1]))$. By (2d) it is sufficient to show that $(\Gamma_1,s_{n+1}(\Gamma_2[r/n+1])\vdash s_{n+1}(S[r/n+1])=s_{n+1}(S'[r/n+1]))$ and $(\Gamma_1,s_{n+1}(\Gamma_2[r/n+1])\vdash s_{n+1}(S'[r/n+1])=s_{n+1}(S'[r'/n+1]))$. The first relation follows directly from (6a). To prove the second one it is sufficient by (7a) to show that $(\Gamma_1,T,\Gamma_2,S'\rhd)$ which follows from our assumption through (2c) and (2a). 

(4) Given $(\Gamma_1\vdash r:T)\simeq(\Gamma_1'\vdash r':T')$ and $(\Gamma_1,T,\Gamma_2\vdash o:S)\simeq(\Gamma_1',T',\Gamma_2'\vdash o':S')$ we have to show that 
$$(\Gamma_1,s_{n+1}(\Gamma_2[r/n+1])\vdash s_{n+1}(o[r/n+1]):s_{n+1}(S[r/n+1]))\simeq$$ 
$$ (\Gamma'_1,s_{n+1}(\Gamma'_2[r'/n+1])\vdash s_{n+1}(o'[r'/n+1]):s_{n+1}(S'[r'/n+1])).$$
where $n=l(\Gamma_1)=l(\Gamma_1')$ or equivalently that 
$$(\Gamma_1,s_{n+1}(\Gamma_2[r/n+1]),s_{n+1}(S[r/n+1]))\sim(\Gamma'_1,s_{n+1}(\Gamma'_2[r'/n+1]), s_{n+1}(S'[r'/n+1]))$$
and $(\Gamma_1,s_{n+1}(\Gamma_2[r/n+1])\vdash s_{n+1}(o[r/n+1])=s_{n+1}(o'[r'/n+1]):s_{n+1}(S[r/n+1]))$. The first statement follows from part (3) of the lemma. To prove the second statement it is sufficient by (3d) to show that  $(\Gamma_1,s_{n+1}(\Gamma_2[r/n+1])\vdash s_{n+1}(o[r/n+1])=s_{n+1}(o'[r/n+1]):s_{n+1}(S[r/n+1]))$ and  $(\Gamma_1,s_{n+1}(\Gamma_2[r/n+1])\vdash s_{n+1}(o'[r/n+1])=s_{n+1}(o'[r'/n+1]):s_{n+1}(S[r/n+1]))$. The first assertion follows directly from (6b). To prove the second one it is sufficient in view of (7b) to show that $(\Gamma_1,T,\Gamma_2\vdash o':S)$ which follows conditions (3c) and (3a).

(5) Given $(\Gamma,T)\sim(\Gamma',T')$ we need to show that $(\Gamma,T\vdash (n+1):T)\simeq(\Gamma',T'\vdash (n+1):T')$ or equivalently that $(\Gamma,T,T)\sim(\Gamma,T',T')$ and $(\Gamma,T\vdash (n+1)=(n+1):T)$. The second part follows from (3b). To prove the first part we need to show that $(\Gamma,T\vdash T=T')$. This follows from our assumption by (5a). 
\end{proof}
\begin{lemma}
\llabel{2014.07.12.l1}
Let $C$ be a subset of $Ob(CC(R,LM))$ which is closed under $ft$. Let $\le$ be a transitive relation on $C$ such that:
\begin{enumerate}
\item $\Gamma\le \Gamma'$ implies $l(\Gamma)=l(\Gamma')$,
\item $\Gamma\in C$ and $ft(\Gamma)\le F$ implies $\sigma(\Gamma,F)\in C$ and $\Gamma\le \sigma(\Gamma,F)$.
\end{enumerate}
Then $\Gamma\in C$ and $ft^i(\Gamma)\le F$ for some $i\ge 1$, implies that $\Gamma\le \sigma(\Gamma,F)$. 
\end{lemma}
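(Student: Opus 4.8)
The plan is to prove the statement by induction on $i\ge 1$, establishing at each stage the strengthened, universally quantified claim $P(i)$: for every $\Gamma\in C$ and every $F$ with $ft^i(\Gamma)\le F$ one has $\Gamma\le\sigma(\Gamma,F)$. Letting $F$ range over all admissible contexts in the induction hypothesis is exactly what makes the step close. The base case $P(1)$ is precisely hypothesis (2) of the lemma, so all of the work is in the inductive step.

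First I would fix notation. Writing $\Gamma=(T_1,\dots,T_n)$ with $n=l(\Gamma)$, hypothesis (1) forces $l(F)=l(ft^i(\Gamma))=n-i$, so $\sigma(\Gamma,F)=(F_1,\dots,F_{n-i},T_{n-i+1},\dots,T_n)$ is the context obtained from $\Gamma$ by overwriting its first $n-i$ entries by those of $F$. In particular $\sigma(\Gamma,F)$ is defined exactly because $i\ge 1$, and the hypothesis forces $i\le n$.

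For the step, assume $P(i-1)$ with $i\ge 2$ and take $\Gamma\in C$ with $ft^i(\Gamma)\le F$. Since $C$ is closed under $ft$ we have $ft(\Gamma)\in C$, and $ft^{i-1}(ft(\Gamma))=ft^i(\Gamma)\le F$; applying $P(i-1)$ to the context $ft(\Gamma)$ (with the same $F$) gives $ft(\Gamma)\le H$, where $H:=\sigma(ft(\Gamma),F)=(F_1,\dots,F_{n-i},T_{n-i+1},\dots,T_{n-1})$ has length $n-1$. But $ft(\Gamma)\le H$ is precisely the premise of hypothesis (2) for $\Gamma$ and $H$, which yields $\Gamma\le\sigma(\Gamma,H)$. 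It then remains to identify $\sigma(\Gamma,H)$ with $\sigma(\Gamma,F)$: overwriting the first $n-1$ entries of $\Gamma$ by $H$ changes only positions $1,\dots,n-i$, since $H$ agrees with $\Gamma$ in positions $n-i+1,\dots,n-1$, so $\sigma(\Gamma,H)=(F_1,\dots,F_{n-i},T_{n-i+1},\dots,T_n)=\sigma(\Gamma,F)$. Hence $\Gamma\le\sigma(\Gamma,F)$ and the induction is complete.

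The one place demanding care is the index accounting, so I would verify at each invocation that $\sigma$ is defined: $l(ft(\Gamma))=n-1>n-i=l(F)$ for the appeal to $P(i-1)$, and $l(\Gamma)=n>n-1=l(H)$ for the appeal to (2). The conceptual crux is the bookkeeping identity $\sigma(\Gamma,\sigma(ft(\Gamma),F))=\sigma(\Gamma,F)$, which holds because the tail entries reinstated by the inner $\sigma$ are overwritten again, unchanged, by the outer one; I expect this, rather than any structural difficulty, to be the main thing to get right. I would also remark that, organized this way, the argument never composes two instances of $\le$, so the transitivity of $\le$ is not actually invoked; I would double-check whether the author's intended proof uses it more essentially or whether it figures among the hypotheses only because it is part of the standing assumptions on $\le$.
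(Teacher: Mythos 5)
Your proof is correct and is exactly the argument the paper intends: its entire proof reads ``Simple induction on $i$,'' and your write-up supplies the details of that induction, including the key bookkeeping identity $\sigma(\Gamma,\sigma(ft(\Gamma),F))=\sigma(\Gamma,F)$. Your observation that transitivity of $\le$ is never invoked is also accurate.
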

\begin{proof}
Simple induction on $i$.
\end{proof}
\begin{lemma}
\llabel{2014.07.12.l2}
Let $C$ and $\le$ be as in Lemma \ref{2014.07.12.l1}. Then one has:
\begin{enumerate}
\item $(\Gamma,T)\le (\Gamma,T')$ and $\Gamma\le \Gamma'$ implies that $(\Gamma,T)\le (\Gamma',T')$,
\item if $\le$ is $ft$-monotone (i.e. $\Gamma\le \Gamma'$ implies $ft(\Gamma)\le ft(\Gamma')$) and symmetric then $(\Gamma,T)\le (\Gamma',T')$ implies that $(\Gamma,T)\le (\Gamma,T')$.
\end{enumerate}
\end{lemma}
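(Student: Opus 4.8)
The plan is to derive both parts from a single application of condition (2) of Lemma \ref{2014.07.12.l1} together with the transitivity of $\le$. The key computational observation is that for any object $F$ with $l(F)=n$, any object $F'$ with $l(F')=n$, and any $S\in LM(\{1,\dots,n\})$, one has $\sigma((F',S),F)=(F,S)$. Indeed, $(F',S)$ has length $n+1>n=l(F)$, so $\sigma((F',S),F)$ is defined, and by the definition of $\sigma$ it replaces the length-$n$ truncation $F'$ of $(F',S)$ by $F$ while keeping the top entry $S$ untouched. Thus condition (2), which asserts $\Delta\le\sigma(\Delta,F)$ whenever $ft(\Delta)\le F$, becomes exactly a device for replacing the context underneath a fixed top type while preserving $\le$.

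For part (1), set $n=l(\Gamma)=l(\Gamma')$. The hypothesis $(\Gamma,T)\le(\Gamma,T')$ places $(\Gamma,T')$ in $C$, and $ft((\Gamma,T'))=\Gamma\le\Gamma'$. Applying condition (2) to the object $(\Gamma,T')$ with $F=\Gamma'$ gives $\sigma((\Gamma,T'),\Gamma')\in C$ and $(\Gamma,T')\le\sigma((\Gamma,T'),\Gamma')$. By the observation above $\sigma((\Gamma,T'),\Gamma')=(\Gamma',T')$, so $(\Gamma,T')\le(\Gamma',T')$. Chaining this with the hypothesis $(\Gamma,T)\le(\Gamma,T')$ and using transitivity yields $(\Gamma,T)\le(\Gamma',T')$, as required.

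For part (2), I would first extract the base inequality: since $ft((\Gamma,T))=\Gamma$ and $ft((\Gamma',T'))=\Gamma'$, the assumed $ft$-monotonicity applied to $(\Gamma,T)\le(\Gamma',T')$ gives $\Gamma\le\Gamma'$, and symmetry then gives $\Gamma'\le\Gamma$. Now apply condition (2) to the object $(\Gamma',T')$ with $F=\Gamma$: because $ft((\Gamma',T'))=\Gamma'\le\Gamma$, we obtain $(\Gamma',T')\le\sigma((\Gamma',T'),\Gamma)=(\Gamma,T')$. Chaining $(\Gamma,T)\le(\Gamma',T')\le(\Gamma,T')$ and invoking transitivity gives $(\Gamma,T)\le(\Gamma,T')$.

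I do not expect a genuine obstacle here; the proof is a short bookkeeping argument, and the only points needing care are mechanical. First, one must check the identity $\sigma((F',S),F)=(F,S)$ directly against the definition of $\sigma$, verifying that the length condition $l((F',S))=n+1>n=l(F)$ holds so that $\sigma$ is indeed defined on this pair. Second, in part (2) the two extra hypotheses must be used in the correct order: $ft$-monotonicity converts the given one-step-extended relation into $\Gamma\le\Gamma'$, and symmetry is precisely what is needed to reverse it to $\Gamma'\le\Gamma$, which is the form of hypothesis that allows condition (2) to be applied to $(\Gamma',T')$ (with $F=\Gamma$) rather than to $(\Gamma,T)$. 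I expect this reversal via symmetry to be the step most easily overlooked.
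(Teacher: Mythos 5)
Your proof is correct and follows essentially the same route as the paper: both parts are obtained from the chains $(\Gamma,T)\le(\Gamma,T')\le\sigma((\Gamma,T'),\Gamma')=(\Gamma',T')$ and $(\Gamma,T)\le(\Gamma',T')\le\sigma((\Gamma',T'),\Gamma)=(\Gamma,T')$, with $ft$-monotonicity and symmetry supplying $\Gamma'\le\Gamma$ in the second part. Your explicit verification of the identity $\sigma((F',S),F)=(F,S)$ is a welcome addition of detail, but the argument is the paper's.
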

\begin{proof}
The first assertion follows from
$$(\Gamma,T)\le (\Gamma,T')\le \sigma((\Gamma,T'),\Gamma')=(\Gamma',T')$$
The second assertion  follows from
$$(\Gamma,T)\le (\Gamma',T')\le \sigma((\Gamma',T'),\Gamma)=(\Gamma,T')$$
where the second $\le$ requires $\Gamma'\le \Gamma$ which follows from $ft$-monotonicity and symmetry.
\end{proof}
\begin{lemma}
\llabel{2014.07.12.l3}
Let $C,\le$ be as in Lemma \ref{2014.07.12.l1}, let $\wt{C}$ be a subset of $\wt{Ob}(CC(R,LM))$ and $\le'$ a transitive relation on $\wt{C}$ such that: 
\begin{enumerate}
\item ${\cal J}\le' {\cal J}'$ implies $\partial({\cal J})\le\partial({\cal J}')$,
\item ${\cal J}\in \wt{C}$ and $\partial({\cal J})\le F$ implies $\wt{\sigma}({\cal J},F)\in \wt{C}$ and ${\cal J}\le' \wt{\sigma}({\cal J},F)$.
\end{enumerate}
Then ${\cal J}\in \wt{C}$ and $ft^i(\partial({\cal J}))\le F$ for some $i\ge 0$ implies ${\cal J}\le \wt{\sigma}({\cal J},F)$. 
\end{lemma}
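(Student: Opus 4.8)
The plan is to lift Lemma~\ref{2014.07.12.l1} from $C$ to $\wt{C}$ by means of hypothesis (2), treating the boundary value $i=0$ separately. Throughout write $L=l(\partial({\cal J}))$ for the length of the object underlying ${\cal J}$, and recall that $\wt{\sigma}({\cal J},-)$ acts by replacing an initial segment of this object (leaving the remaining entries and the declared term $t$ untouched), so that $\wt{\sigma}({\cal J},G)$ is defined precisely when $l(G)\le L$. Since $ft^i(\partial({\cal J}))\le F$ forces $l(F)=L-i$ by condition (1) of Lemma~\ref{2014.07.12.l1}, the expression $\wt{\sigma}({\cal J},F)$ is always defined. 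Note also that $\partial({\cal J})\in C$ (as is implicit already in condition (1), where $\le$ is applied to $\partial$-images), so that $\le$ and Lemma~\ref{2014.07.12.l1} may be applied to it.

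For $i=0$ the hypothesis reads $\partial({\cal J})\le F$, which is exactly the premise of condition (2) of the present lemma; that condition yields $\wt{\sigma}({\cal J},F)\in\wt{C}$ and ${\cal J}\le'\wt{\sigma}({\cal J},F)$ directly. For $i\ge 1$ I would set $F'=\sigma(\partial({\cal J}),F)$; this is well defined because $l(F)=L-i<L=l(\partial({\cal J}))$. Applying Lemma~\ref{2014.07.12.l1} to $\partial({\cal J})\in C$ with the same $i\ge 1$ gives $\partial({\cal J})\le F'$. Feeding this into condition (2), now with $F'$ in place of $F$, produces $\wt{\sigma}({\cal J},F')\in\wt{C}$ and ${\cal J}\le'\wt{\sigma}({\cal J},F')$.

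It then remains to identify $\wt{\sigma}({\cal J},F')$ with $\wt{\sigma}({\cal J},F)$, and this is the one point requiring care. By construction $F'=\sigma(\partial({\cal J}),F)$ is the length-$L$ context whose first $L-i$ entries are those of $F$ and whose last $i$ entries are the last $i$ entries of $\partial({\cal J})$. Since $l(F')=L$, $\wt{\sigma}({\cal J},F')$ overwrites the whole object of ${\cal J}$ by $F'$ while keeping $t$; since $l(F)=L-i<L$, $\wt{\sigma}({\cal J},F)$ overwrites only the first $L-i$ entries by $F$, keeps the remaining $i$ entries of $\partial({\cal J})$, and keeps $t$. As the trailing entries of $F'$ are by definition exactly those remaining entries of $\partial({\cal J})$, the two results coincide, giving ${\cal J}\le'\wt{\sigma}({\cal J},F)$ and completing the argument. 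The \textbf{main obstacle} is precisely this index bookkeeping: one must verify that replacing an initial segment directly coincides with replacing it through the full-length intermediate $F'$, and one must keep the degenerate case $i=0$ (where $\sigma$ is undefined and Lemma~\ref{2014.07.12.l1} does not apply) separate, handling it by a bare appeal to condition (2).
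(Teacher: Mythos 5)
Your argument is correct. The paper's own proof is a direct induction on $i$ (it reads, in full, ``Simple induction on $i$''), whereas you avoid re-running that induction by outsourcing it to Lemma \ref{2014.07.12.l1}: for $i\ge 1$ you form the full-length intermediate $F'=\sigma(\partial({\cal J}),F)$, obtain $\partial({\cal J})\le F'$ from that lemma, apply hypothesis (2) of the present lemma once, and then verify the bookkeeping identity $\wt{\sigma}({\cal J},\sigma(\partial({\cal J}),F))=\wt{\sigma}({\cal J},F)$. The inductive proof needs essentially the same identity (either at each step with $\sigma(ft^{j}(\partial({\cal J})),F)$ or once at the end), so nothing is lost; what your route buys is that the only genuine induction in this pair of lemmas lives in Lemma \ref{2014.07.12.l1}, and the present lemma collapses to a single reduction step. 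Your insistence on treating $i=0$ separately is also right, since $\sigma(\partial({\cal J}),F)$ is undefined when $l(F)=l(\partial({\cal J}))$. One point worth flagging, which affects the paper's proof just as much as yours: both arguments need $\partial({\cal J})\in C$ (you, to invoke Lemma \ref{2014.07.12.l1}; the inductive proof, to apply hypothesis (2) at the top level), and this is not literally among the stated hypotheses. For $i=0$ it is forced by $\partial({\cal J})\le F$, and in the intended application ($C=Ob(CC)$, $\wt{C}=\wt{Ob}(CC)$ for a C-subsystem $CC$) it always holds, but for $i\ge 1$ it cannot be derived from $ft^{i}(\partial({\cal J}))\in C$ together with closure of $C$ under $ft$, so your parenthetical justification via condition (1) is not quite a proof of it; it is an implicit standing assumption of the lemma.
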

\begin{proof}
Simple induction on $i$.
\end{proof}
\begin{lemma}
\llabel{2014.07.12.l4}
Let $C,\le$ and $\wt{C},\le'$ be as in Lemma \ref{2014.07.12.l3}. Then one has:
\begin{enumerate}
\item $(\Gamma\vdash o:T)\le' (\Gamma\vdash o':T)$ and $(\Gamma,T)\le (\Gamma',T')$ implies that $(\Gamma\vdash o:T)\le' (\Gamma'\vdash o':T')$,
\item if $(\le,\le')$ is $\partial$-monotone (i.e. ${\cal J}\le' {\cal J}'$ implies $\partial({\cal J})\le \partial({\cal J}')$) and $\le$ is symmetric then $(\Gamma\vdash o:T)\le' (\Gamma'\vdash o':T')$ implies that $(\Gamma\vdash o:T)\le' (\Gamma\vdash o':T)$.
\end{enumerate}
\end{lemma}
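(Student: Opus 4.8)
The plan is to prove both parts by exactly the device used in the proof of Lemma \ref{2014.07.12.l2}, replacing the operation $\sigma$ and Lemma \ref{2014.07.12.l1} by their ``tilde'' analogues $\wt{\sigma}$ and Lemma \ref{2014.07.12.l3}. The key computational input is that, when $l(\Gamma)=l(\Gamma')$, the operation $\wt{\sigma}$ applied in the boundary case $l(\partial({\cal J}))=l(\Gamma')$ simply overwrites the entire context together with the type while keeping the term. Concretely,

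$$\wt{\sigma}((\Gamma\vdash o':T),(\Gamma',T'))=(\Gamma'\vdash o':T')$$

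and symmetrically $\wt{\sigma}((\Gamma'\vdash o':T'),(\Gamma,T))=(\Gamma\vdash o':T)$. I would verify these two identities first, directly from the $i=0$ clause in the definition of $\wt{\sigma}$.

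For part (1), observe that the hypothesis $(\Gamma,T)\le (\Gamma',T')$ forces $l(\Gamma)=l(\Gamma')$ by property (1) of Lemma \ref{2014.07.12.l1}. I would then apply Lemma \ref{2014.07.12.l3} to ${\cal J}=(\Gamma\vdash o':T)$ and $F=(\Gamma',T')$: with $i=0$ its hypothesis $ft^0(\partial({\cal J}))\le F$ is precisely $(\Gamma,T)\le (\Gamma',T')$, which holds, so the lemma yields $(\Gamma\vdash o':T)\le' \wt{\sigma}((\Gamma\vdash o':T),(\Gamma',T'))=(\Gamma'\vdash o':T')$. Composing with the given $(\Gamma\vdash o:T)\le' (\Gamma\vdash o':T)$ and using transitivity of $\le'$ gives the chain

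$$(\Gamma\vdash o:T)\le' (\Gamma\vdash o':T)\le' (\Gamma'\vdash o':T')$$

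as required.

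For part (2), the additional hypotheses are exactly what is needed to run the same argument ``backwards''. From $(\Gamma\vdash o:T)\le' (\Gamma'\vdash o':T')$ and $\partial$-monotonicity I would deduce $(\Gamma,T)\le (\Gamma',T')$, and then by symmetry of $\le$ obtain $(\Gamma',T')\le (\Gamma,T)$. This is the hypothesis (again with $i=0$) needed to apply Lemma \ref{2014.07.12.l3} to ${\cal J}'=(\Gamma'\vdash o':T')$ and $F=(\Gamma,T)$, giving $(\Gamma'\vdash o':T')\le' \wt{\sigma}((\Gamma'\vdash o':T'),(\Gamma,T))=(\Gamma\vdash o':T)$. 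Transitivity then produces

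$$(\Gamma\vdash o:T)\le' (\Gamma'\vdash o':T')\le' (\Gamma\vdash o':T).$$

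The argument is essentially routine once the $\wt{\sigma}$ computation is in hand. The only point requiring care is the bookkeeping of lengths and offsets in the definition of $\wt{\sigma}$: one must confirm that in both applications we are genuinely in the boundary ($i=0$) case where the whole boundary object is replaced and the term component is preserved unchanged, and that the hypothesis $ft^i(\partial(\cdot))\le F$ of Lemma \ref{2014.07.12.l3} is met with $i=0$ in each case. I expect this index-matching, rather than any conceptual difficulty, to be the main (and only) obstacle.
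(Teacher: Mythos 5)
Your proof is correct and follows essentially the same route as the paper's: the paper likewise establishes part (1) via the chain $(\Gamma\vdash o:T)\le'(\Gamma\vdash o':T)\le'\wt{\sigma}((\Gamma\vdash o':T),(\Gamma',T'))=(\Gamma'\vdash o':T')$ and part (2) via $(\Gamma\vdash o:T)\le'(\Gamma'\vdash o':T')\le'\wt{\sigma}((\Gamma'\vdash o':T'),(\Gamma,T))=(\Gamma\vdash o':T)$, using $\partial$-monotonicity and symmetry of $\le$ to obtain $(\Gamma',T')\le(\Gamma,T)$. Your only (harmless) extra step is routing through the conclusion of Lemma \ref{2014.07.12.l3} with $i=0$, whereas hypothesis (2) of that lemma already gives the needed inequality directly.
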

\begin{proof}
The first assertion follows from
$$(\Gamma\vdash o:T)\le'  (\Gamma\vdash o':T)\le' \wt{\sigma}((\Gamma\vdash o':T) ,(\Gamma',T'))=(\Gamma'\vdash o':T')$$
The second assertion follows from
$$\Gamma\vdash o:T)\le' (\Gamma'\vdash o':T')\le' \sigma((\Gamma'\vdash o':T'),(\Gamma,T))=(\Gamma\vdash o':T)$$
where the second $\le$ requires $\Gamma'\le \Gamma$ which follows from $\partial$-monotonicity of $\le'$ and symmetry of $\le$.
\end{proof}

\begin{proposition}
\llabel{2014.07.10.prop2}
Let $(C,\wt{C})$ be subsets in $Ob(CC(R,LM))$ and $\wt{Ob}(CC(R,LM))$ respectively which correspond to a C-subsystem $CC$ of $CC(R,LM)$. Then the constructions presented above establish a bijection between pairs of subsets $(Ceq,\wt{Ceq})$ which together with $(C,\wt{C})$ satisfy the conditions of Proposition \ref{2014.07.10.prop1} and pairs of equivalence relations $(\sim,\simeq)$ on $(C,\wt{C})$ such that:
\begin{enumerate}
\item $(\sim,\simeq)$ corresponds to a regular congruence relation on $CC$ (i.e., satisfies the conditions of \cite[Proposition 5.4]{Csubsystems}),
\item $\Gamma\in C$ and $ft(\Gamma)\sim F$ implies $\Gamma\sim \sigma(\Gamma,F)$,
\item ${\cal J}\in \wt{C}$ and $\partial({\cal J})\sim F$ implies ${\cal J}\simeq \wt{\sigma}({\cal J},F)$.
\end{enumerate}
\end{proposition}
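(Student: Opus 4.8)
The plan is to realize the two constructions as mutually inverse maps. In one direction, $\Phi$ sends a pair $(Ceq,\wt{Ceq})$ satisfying Proposition~\ref{2014.07.10.prop1} to the relations $(\sim,\simeq)$ of Definition~\ref{simandsimeq}. In the other direction, $\Psi$ sends a pair $(\sim,\simeq)$ to
$$Ceq=\{(\Gamma,S_1,S_2)\,:\,(\Gamma,S_1)\sim(\Gamma,S_2)\},\qquad \wt{Ceq}=\{(\Gamma,S,o,o')\,:\,(\Gamma\vdash o:S)\simeq(\Gamma\vdash o':S)\},$$
where membership in a relation presupposes that both sides lie in $C$, respectively $\wt{C}$. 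I would then check three things: that $\Phi$ lands among the $(\sim,\simeq)$ described in the statement, that $\Psi$ lands among the $(Ceq,\wt{Ceq})$ of Proposition~\ref{2014.07.10.prop1}, and that $\Phi$ and $\Psi$ are mutually inverse.

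That $\Phi(Ceq,\wt{Ceq})$ satisfies condition~(1) is precisely the content of Proposition~\ref{2014.07.10.prop1} (assembled from Lemmas~\ref{iseqrelsim}, \ref{surjl1} and~\ref{TSetc}). Conditions~(2) and~(3) are immediate: for $\Gamma\in C$ with $ft(\Gamma)\sim F$ the object $\sigma(\Gamma,F)$ has the same final component as $\Gamma$ and satisfies $ft(\sigma(\Gamma,F))=F$, so by Definition~\ref{simandsimeq} the relation $\Gamma\sim\sigma(\Gamma,F)$ reduces to the hypothesis $ft(\Gamma)\sim F$ together with reflexivity of the last type, which holds by~(2b); condition~(3) is obtained the same way from the $i=0$ clause of $\wt{\sigma}$ and reflexivity~(3b).

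For $\Psi$ I would derive conditions~(2)--(7) of Proposition~\ref{2014.07.10.prop1} from hypotheses~(1)--(3). Conditions~(2a),(3a) hold because $\sim,\simeq$ are relations on $C,\wt{C}$, and (2b)--(2d),(3b)--(3d) are reflexivity, symmetry and transitivity of the equivalence relations $\sim,\simeq$. The weakening and substitution closures~(5),(6) follow directly from compatibility of the regular congruence with the operations $T,\wt{T},S,\wt{S}$ (part of condition~(1)): for instance the operation $T$ carries $((\Gamma_1,U),(\Gamma_1,\Gamma_2,S))$ to $(\Gamma_1,U,t_{i+1}\Gamma_2,t_{i+1}S)$, so feeding reflexivity into its first argument and $(\Gamma_1,\Gamma_2,S)\sim(\Gamma_1,\Gamma_2,S')$ into its second yields~(5a), and the remaining cases are analogous. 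Conditions~(4a),(4b) come from Lemma~\ref{2014.07.12.l1} applied to $\sim$ (its hypotheses furnished by conditions~(1),(2)), which replaces $T$ by $T'$ arbitrarily deep in a context and then transports the given equality by symmetry and transitivity; (4c) uses condition~(3) (the $i=0$ case of $\wt{\sigma}$) to move the type of a term from $S$ to $S'$. Conditions~(7a),(7b) are the only ones that are not pure congruence statements and form the first genuine obstacle: compatibility with $S,\wt{S}$ produces an equivalence whose two sides carry the \emph{different} contexts $s_{i+1}(\Gamma_2[r/i+1])$ and $s_{i+1}(\Gamma_2[r'/i+1])$, so one must re-align them to a common context before reading off the required equality, which is exactly what Lemmas~\ref{2014.07.12.l2}(2) and~\ref{2014.07.12.l4}(2) accomplish (their monotonicity and symmetry hypotheses again coming from condition~(1)).

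It remains to see that $\Phi$ and $\Psi$ are inverse. The composite $\Psi\circ\Phi$ is the identity because Definition~\ref{simandsimeq} gives $(\Gamma,S_1)\sim(\Gamma,S_2)\iff \Gamma\sim\Gamma \mbox{ and } (\Gamma\vdash S_1=S_2)$, and reflexivity of $\sim$ (available since the objects involved lie in $C$ by~(2a)) deletes the clause $\Gamma\sim\Gamma$; the $\wt{Ceq}$ case is identical using~(3a). The composite $\Phi\circ\Psi$ is the crux and the main obstacle. Re-deriving $(\sim',\simeq')$ from $(Ceq,\wt{Ceq})=\Psi(\sim,\simeq)$, I would show $\sim'=\sim$ by induction on length, which reduces to the equivalence
$$(\Delta,A)\sim(\Delta',A')\iff \Delta\sim\Delta' \mbox{ and } (\Delta,A)\sim(\Delta,A').$$
The forward implication uses $ft$-monotonicity of $\sim$ for $\Delta\sim\Delta'$ and Lemma~\ref{2014.07.12.l2}(2) for the common-context form, while the backward implication is Lemma~\ref{2014.07.12.l2}(1); the statement $\simeq'=\simeq$ is the same argument with Lemma~\ref{2014.07.12.l4}. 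The conceptual reason conditions~(2),(3) are imposed is precisely this step: an arbitrary regular congruence might relate two contexts in a way not visible componentwise through $ft$-equivalence and last-component equality, and compatibility with $\sigma$ and $\wt{\sigma}$ — packaged in Lemmas~\ref{2014.07.12.l1}--\ref{2014.07.12.l4} — is what guarantees that $\sim$ is reconstructed from its local data $Ceq$, so that $\Phi$ and $\Psi$ are inverse.
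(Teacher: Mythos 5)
Your proposal follows the paper's proof essentially verbatim: the same two constructions, conditions (4) obtained from hypotheses (2),(3) via Lemmas \ref{2014.07.12.l1} and \ref{2014.07.12.l3}, conditions (5)--(7) from compatibility with $T,\wt{T},S,\wt{S}$; indeed you supply more detail than the paper does on the mutual-inverse check and on the context re-alignment via Lemmas \ref{2014.07.12.l2}(2), \ref{2014.07.12.l4}(2) needed for (7a),(7b). One small patch: in the forward direction, $\sim$ and $\simeq$ are by Definition \ref{simandsimeq} relations on $C$ and $\wt{C}$, so asserting $\Gamma\sim\sigma(\Gamma,F)$ and ${\cal J}\simeq\wt{\sigma}({\cal J},F)$ requires first checking $\sigma(\Gamma,F)\in C$ and $\wt{\sigma}({\cal J},F)\in\wt{C}$, which does not follow from reflexivity alone but from Lemma \ref{iseqrelsiml1} combined with (2a), (3a) --- this is exactly why the paper cites that lemma for its conditions (2),(3). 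Also, transporting the term-equality in (4b) across the change of context is a statement about $\simeq$ and so uses $\wt{\sigma}$, i.e.\ Lemma \ref{2014.07.12.l3} rather than Lemma \ref{2014.07.12.l1}.
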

\begin{proof}
One constructs a pair $(\sim,\simeq)$ from $(Ceq,\wt{Ceq})$ as in Definition \ref{simandsimeq}. 
This pair corresponds to a regular congruence relation by Proposition \ref{2014.07.10.prop1}.
Conditions (2),(3) follow from Lemma \ref{iseqrelsiml1}.

Let $(\sim,\simeq)$ be equivalence relations satisfying the conditions of the proposition. Define $Ceq$ as the set of sequences $(\Gamma,T,T')$ such that $(\Gamma,T), (\Gamma,T')\in C$ and $(\Gamma,T)\sim (\Gamma,T')$. Define $\wt{Ceq}$ as the set of sequences $(\Gamma,T,o,o')$ such that $(\Gamma,T,o),(\Gamma,T,o')\in \wt{C}$ and $(\Gamma,T,o)\simeq (\Gamma,T,o')$. 

Let us show that these subsets satisfy the conditions of Proposition \ref{2014.07.10.prop1}. Conditions (2.a-2.d) and (3.a-3d) are obvious. 

Condition (4a) follows from (2) by Lemma \ref{2014.07.12.l1}.
Conditions (4b) and (4c) follow from (3) by Lemma \ref{2014.07.12.l3}.

Conditions (5a) and (5b) follow from the compatibility of $(\sim,\simeq)$ with $T$ and $\wt{T}$. 

Conditions (6a),(6b),(7a),(7b) follow from the compatibility of $(\sim,\simeq)$ with $S$ and $\wt{S}$.
\end{proof}

\subsection{Pairs $(R,LM)$ associated with nominal signatures.}
\llabel{2014.07.22.sec}
The constructions of this paper produce C-systems from a pair $(R,LM)$ where $R$ is a monad on $Sets$ and $LM$ is a left $R$-module with values in $Sets$ together with sets $C$, $\wt{C}$, $Ceq$ and $\wt{Ceq}$. 

One class of such pairs is obtained by taking $R$ to be the monad defined by a signature as in \cite[p.228]{HM2007}. For example, the contextual category of the Martin-Lof type theory from 1972,  $MLTT72$ defined in \cite{ML72}, is obtained by applying Proposition \ref{2014.07.10.prop1} in the case of the pair $(R,R)$ where $R$ is the monad defined by the signature that corresponds to the nominal signature of Example \ref{2014.08.ex}.

The following construction that covers more examples associates a pair $(R,LM)$ to a quadruple $(\Sigma, Term, P, {\bf Type})$ where $\Sigma$ is a nominal signature with one name-sort $Var$ and a set of data-sorts $D$, $Term\in {\bf D}$ is a data-sort, $P$ is a family of sets parametrized by ${\bf D}-\{Term\}$,  and ${\bf Type}\subset \{\bf D\}$ is a subset of data-sorts. 

In most examples either ${\bf D}=\{Term\}$ or ${\bf D}=\{Term,Type\}$, ${\bf Type}=\{Type\}$ and $P=P_{Type}$ is the set of "type-variables". 

The only example which I know of where there are more than two data-sorts is the logic-enriched type theory of \cite{AczelGambino} where ${\bf D}=\{Term, Type, Prop\}$, ${\bf Type}=\{Type\}$, $P_{Type}$ is the set of type variables and $P_{Prop}$ is the set of propositional variables. 

The construction is as follows.  A {\em nominal signature} (see \cite[Section 8.1]{Pitts}) starts with a set of name-sorts $\bf N$ and the set of data-sorts $\bf D$. We will be interested in the case when there is only one name-sort $Var$.

A compound sort $S$ is defined as an expression formed from $Var$, elements of $\bf D$, and the unit sort $1$ using two operations: one sending $S_1$ and $S_2$ to $(S_1,S_2)$ and another one sending $S$ to $Var.S$.  For better notations one takes $(\_,\_)$ to associate on the left i.e. $(S_1,S_2,S_3)$ means $((S_1,S_2),S_3)$ and similarly for longer sequences.

 Let $CS$ be the set of compound sorts. An arity is a pair $(S,D)$ where $S\in CS$ and $D\in {\bf D}$. Let $A({\bf D})$ be the set of arities for the set of data-sorts $\bf D$. 

A nominal signature is defined as a set $Op$, which is called the set of operations, together with a function  $Ar:Op\sr A({\bf D})$ which assigns to any operation its ``arirty". One writes $O:S\sr D$ to denote that operation $O$ has arity $(S,D)$. We let $Ar_{CS}$ and $Ar_{\bf D}$ denote the two components of the arity. 

For example, the nominal signature of the lambda calculus has one data-sort $Term$ and three operations $V$, $L$, and $A$  of the form:
$$V:Var\sr Term$$
$$L:Var.Term\sr Term$$
$$A:Term.Term\sr Term$$
The algebraic signature with one sort $Term$, one operation $S$ in one variable and one constant $O$ will, in this language, have {\em three}  operations:
$$V:Var\sr Term$$
$$S: Term\sr Term$$
$$O:1\sr Term$$
More generally, an algebraic signature is a nominal signature where 
$$Op=Op_0\coprod\{v_D\}_{D\in {\bf D}}$$
with 
$$v_D:Var\sr D$$
and for $O\in Op_0$
$$O:(D_1,\dots,D_n)\sr D$$
for some $n\ge 0$ and $D_1,\dots,D_n,D\in {\bf D}$ where $n$ and $D$'s may depend on $O$.  

An example of a signature where variables are not terms is given in \cite{Pitts}. 

A nominal signature can be used to construct terms of all compound sorts in the more or less obvious way. Next one defines the notion free and bound occurrences of variables in these terms and the notion of the  $\alpha$-equivalence. For a nominal signature $\Sigma$ and a compound sort $S$ one writes $\Sigma_{\alpha}(S)$ for the set of $\alpha$-equivalence classes of terms of sort $S$ build using $\Sigma$.

In the case when $\Sigma$ is the $\lambda$-calculus signature one gets the usual set of $\alpha$-equivalence classes of $\lambda$-terms considering $\Sigma_{\alpha}(Term)$. 

To any nominal signature $\Sigma$ one associates, following \cite{Pitts}, a functor $T_{\Sigma}:Nom^{\bf D}\sr Nom^{\bf D}$ where $Nom$ is the category of nominal sets, as follows.

First one associates a functor $[S]:Nom^{\bf D}\sr Nom$ to any compound sort $S$ by the rule:
$$[Var](X)={\bf A}$$
$$[D](X)=X_D$$
$$[1](X)=1$$
$$[(S_1,S_2)](X)=X\times X$$
$$[(Var.S)]=[{\bf A}](X)$$
where ${\bf A}$ is the standard atomic nominal set (the set of names with the canonical action of the permutation group $Perm$) and $[{\bf A}]$ is the name-abstraction functor $Nom\sr Nom$ which is defined in \cite[Section 4]{Pitts}. 

Let $Op_D$ for $D\in {\bf D}$ be the set of operations $O$ with the target sort $D$, i.e., such that $Ar_{\bf D}(O)=D$.  Then one defines $T_{\Sigma}(X)$ by the rule
$$T_{\Sigma}(X)_D=\coprod_{O\in Op_D} [Ar_{CS}(O)].$$
For example, if $\Sigma$ is the signature of $\lambda$-calculus then 
$$T_{\Sigma}(X)={\bf A}\coprod [{\bf A}](X)\coprod (X\times X)$$

One of the main results of \cite{Pitts} is that the functor $T_{\Sigma}$ has an initial algebra $I_{\Sigma}$ for any $\Sigma$ and $(I_{\Sigma})_D=\Sigma_{\alpha}(D)$. 

Let us extend this construction to a monad on $Nom^{\bf D}$ and then on $Sets^{\bf D}$. First observe that for any $X\in Nom^{\bf D}$ the functor $Y\mapsto T_{\Sigma}(Y)\coprod X$ is finitely presented and therefore it has an initial algebra. Let us denote this algebra by $NR_{\Sigma}(X)$. 

By \cite[pp. 243-244]{Awodey2010}, $NR_{\Sigma}$ is a monad on $Nom^{\bf D}$ whose category of algebras is equivalent to the category of $T_{\Sigma}$-algebras (i.e. $NR_{\Sigma}$ is the free monad generated by $T_{\Sigma}$).

The functor $Discr:Sets \sr Nom$ which takes a set to the corresponding discrete nominal set has a right adjoint $Inv:Nom\sr Sets$ which sends a nominal set $X$ to the set of its fixed points $X^{Perm}$. The functors $Discr^{\bf D}$ and $Inv^{\bf D}$ form an adjoint pair between the categories $Nom^{\bf D}$ and $Sets^{\bf D}$.

Given a monad $R$ on a category $\cal C$ and an adjoint pair $(LF,RF)$ where $RF:{\cal C}\sr{\cal C}'$ is the right adjoint, the composition $R'=RF\circ R\circ LF$ is a monad on ${\cal C}'$.

Applying this fact to the monad $NR_{\Sigma}$ and the pair $(Discr^{\bf D},Inv^{\bf D})$ we conclude that the functor 
$$R_{\Sigma}:X\mapsto NR_{\Sigma}(Discr^{\bf D}(X))^{Perm}$$
is a monad on $Sets^{\bf D}$.

For a family of sets $X$ the functor $T_{\Sigma}\coprod Discr^{\bf D}(X)$ is naturally isomorphic to the functor $T_{\Sigma+X}$ where $\Sigma+X$ is the signature with the set of operations $Op\coprod (\coprod_{D\in {\bf D}} X_D)$ and the arity function defined on $x\in X_D$ by $Ar(x)=(1,D)$ and
$$R_{\Sigma}(X)=NR_{\Sigma}(Discr^{\bf D}(X))^{Perm}=I_{T_{\Sigma}\coprod Discr^{\bf D}(X)}^{Perm}=I_{T_{\Sigma+X}}^{Perm}.$$
Therefore
$$(R_{\Sigma}(X))_D=(\Sigma+X)_{\alpha}(D)^{Perm}$$
is the set of invariants in the set of $\alpha$-equivalence classes of terms of sort $D$ with respect to the signature $\Sigma+X$ i.e. the set of $\alpha$-equivalence classes of closed terms of sort $D$ with respect to $\Sigma+X$. 

If $X_D=\{x_{1,D},\dots,x_{n_D,D}\}$ are finite sets, then the terms with respect to the signature $\Sigma+X$ can be seen as terms with respect to $\Sigma$ which depend on additional parameters $x_{i,D}$ of the corresponding sorts and the closed terms as the terms with respect to $\Sigma$ relative to the name space ${\bf A}^{\bf D}+X$ such that all the occurrences of names from ${\bf A}^{\bf D}$ are bound and all the occurrences of names from $X$ are free. 

To obtain from this construction a pair $(R,LM)$ of a monad on $Sets$ and a left module over this monad with values in $Sets$ we will use Lemma \ref{2014.07.28.l3}. Let $Term\in{\bf D}$ and ${\bf Type}\subset{\bf D}$. Let $P$ a family of sets parametrized by ${\bf D}-{Term}$. For a set $X$ let $(X,P)$ be the family such that $(X,P)_{Term}=X$ and $(X,P)_{D}=P_{D}$ for $D\ne Term$. 

Then $X\mapsto (R_{\Sigma}(X,P))_{Term}$ is a monad $R_{\Sigma,Term,P}$ on $Sets$ by Lemma \ref{2014.07.28.l2} and 
$$X\mapsto \coprod_{D\in {\bf Type}} (R_{\Sigma}(X,P))_D$$
is a left module $LM_{\Sigma,Term,P,{\bf Type}}$ over $R_{\Sigma,Term,P}$ by Lemmas \ref{2014.07.28.l3} and \ref{2014.07.28.l1}(b). 
\begin{example}\rm
The C-systems of generalized algebraic theories (GATs) of \cite{Cartmell0},\cite{Cartmell1} (see also \cite{Garner}) are obtained by using algebraic signatures with two data sorts ${\bf D}=\{Term,Type\}$, ${\bf Type}=\{Type\}$ and $P=\emptyset$. The ``symbols'' of the GAT are operations of the corresponding algebraic signature. The term symbols of degree $n$ have arity $(Term,\dots,Term)\sr Term$ and the type symbols of degree $n$ have arity $(Term,\dots,Term)\sr Type$ where in both cases the lentth of the sequence $(Term,\dots,Term)$ is $n$. 
\end{example}
\begin{example}
\llabel{2014.08.ex}\rm
To define the Martin-Lof Type theory MLTT72 of \cite{ML72}  one needs to consider the case when ${\bf D}=\{Term\}$ and the nominal signature is of the form:
$$v:Var\sr Term$$
$$\Pi:(Term, Var.Term)\sr Term\spc\lambda:(Term,Var.Term)\sr Term$$ $$app:(Term,Term)\sr Term$$
$$\Sigma:(Term,Var.Term)\sr Term\spc pair:(Term,Term)\sr Term$$ $$E:(Term,Var.(Var.Term))\sr Term$$
$$+:(Term,Term)\sr Term\spc i:Term\sr Term\spc j:Term\sr Term$$ $$D:((Term,Var.Term),Var.Term))\sr Term$$
$$V:1\sr Term$$
$$N_n:1\sr Term\spc i_n:1\sr Term\spc R_n:(Term,\dots,Term),\dots)\sr Term$$ $$n\ge 0\spc 1\le i \le n$$
$$N:1\sr Term\spc 0:1\sr Term\spc s:Term\sr Term$$ $$R:((Term,Term),(Var.(Var.Term)))\sr Term$$

Note that in fact $E$, $D$, $R_n$, and $R$ should also have the type family $C$ (see \cite[2.3.6, 2.3.8, 2.3.10, 2.3.12]{ML72}) as an argument which, in our notation, means an additional component of the form $Var.Term$ in their arities.  

In fact, the original definition from \cite{ML72} allows for additional ``type constants'' (see \cite[2.2.1]{ML72}) of various algebraic arities which are analogous to the predicate constants in the predicate logic. As such it is a definition of a family of type systems. The signatures underlying all type systems in this family are obtained by extending the signature described above by a set of operations of the form $P:(Term,\dots,Term)\sr Term$. 

For the signature of the MLTT78 see \cite[p. 158]{ML78}
\end{example}
\begin{remark}\rm
It is possible to ``encode'' a nominal signature in typed $\lambda$-calculus using the idea that closed terms are objects of a base type $term$, terms with one free variable are objects of the type $term\sr term$, terms with two free variables are objects of $term\sr term\sr term$ etc. This encoding allows one to describe the substitutions of closed terms into terms with free variables as applications in the meta-theory.  However, it does not allow to describe the substitution of, e.g., terms with one free variable into terms with one free variable, i.e., the full monadic structure is not recoverable from such a description. This is the reason why the use of typed $\lambda$-calculus systems such as the Logical Framework for the description of the syntax of dependent type theories is of limited use.
\end{remark}

\comment{To be more precise, the input data consists of a nominal signature $\Sigma$ with one name-sort and a set of data-sorts $\bf D$, a distinguished data-sort $Term\in {\bf D}$, a subset of data-sorts ${\bf Type}\subset {\bf D}$ and a family of sets $(P_{D})_{D\in {\bf D}-\{Term\}}$ parametrized by data-sorts distinct from $Term$. For such a quadruple we describe a monad $R$ such that $R(X)$ is the set of $\alpha$-equivalence classes of expressions of sort $Term$ with variables from the name-space 
$$varnames := {\bf A}\amalg X\amalg (\amalg_{D\in {\bf D}-\{Term\}} P_D)$$
where ${\bf A}$ is a countable set, all occurrences of variables from ${\bf A}$ are bound and all occurrences of variables from $X\coprod (\coprod_{D\in {\bf D}-\{Term\}} P_D)$ are free. Note that $R(X)$ depends, up to a canonical isomorphism, only on $X$ but not on the choice of a countable set ${\bf A}$.  We also describe a left module $LM$ over $R$ such that $LM(X)$ is the disjoint union of $\alpha$-equivalence classes of similar expressions of sorts $D$ for $D\in {\bf Type}$. 

When $\Sigma$ Suppose now that we are given a type theory based on the syntax of expressions with free and bound variables specified by a nominal signature $\Sigma$ with one name-sort $var$ and one data-sort $Term$ that is formulated in terms of four kinds of basic judgements originally introduced by Per Martin-Lof in \cite[p.161]{ML78}.  

Choosing $\bf Type$ to be $\{Term\}$ we obtain a pair $(R,LM)$ where $LM$ is isomorphic to $R$ considered as a left module over itself. For a set $X$ the set $R(X)$ is the set of $\alpha$-equivalence classes of $\Sigma$-expressions over the name space ${\bf A}\amalg X$ where all occurrences of names from $\bf A$ are bound and all occurrences of names from $X$ are free.  

Since we are only interested in the $\alpha$-equivalence classes of judgements we may assume that the variables declared in the context are taken from the set of natural numbers such that the first declared variable is $1$, the second is $2$ etc.  Then, the set of judgements of the form $(1:A_1,\dots,n:A_n\vdash A\, type)$ (in the notation of Martin-Lof ``$A\,type\,(1\in A_1,\dots,n\in A_n)$'') can be identified with the set of judgements of the form $(1:A_1,\dots,n:A_n, n+1:A\rhd)$ stating that the context $(1:A_1,\dots,n:A_n, n+1:A)$ is well-formed. 

With this identification the type theory is specified by four sets $C,\wt{C},Ceq$ and $\wt{Ceq}$ where 
$$C \subset \coprod_{n\ge 0} R(\emptyset)\times\dots\times R(\{1,\dots,n-1\})$$
$$\wt{C}\subset  \coprod_{n\ge 0} R(\emptyset)\times\dots\times R(\{1,\dots,n-1\})\times R(\{1,\dots,n\})\times R(\{1,\dots,n\})$$
$$Ceq \subset \coprod_{n\ge 0} R(\emptyset)\times\dots\times R(\{1,\dots,n-1\})\times R(\{1,\dots,n\})^2$$
$$\wt{Ceq} \subset \coprod_{n\ge 0} R(\emptyset)\times\dots\times R(\{1,\dots,n-1\})\times R(\{1,\dots,n\})^2\times R(\{1,\dots,n\})$$ 
and Proposition \ref{2014.07.10.prop1} spells out the necessary and sufficient conditions that these sets should satisfy in order for it to be possible to construct from them a C-system.

More generally, one may consider the case when $\Sigma$ has more than one data-sort as is for example the case in the description of type theories where there is a strict distinction between type expressions and term expressions. In order to define the associated C-system one only needs to have the substitution of expressions for variables when expression is a term expression i.e. an expression of the sort $Term$. Also not all data-sorts of the signature need to correspond to type expressions as for example in the case of logic enriched type theory (see \cite{AczelGambino}) where there is an additional data-sort of propositional expressions. This leads to the idea to consider $\bf Type$ in our construction as a subset of the set of data-sorts of the signature. In order to have everything properly defined one also needs to specify a family $P$. While one can always take it to be the family of empty sets it might be interesting to consider also non-empty cases which correspond to the C-systems determined by a choice of fixed sets of variables or parameters of data-sorts other than the $Term$ sort. 

}

\def\cprime{$'$}

\end{document}